\author{Ndouné Ndouné  }
\title{on involutive cluster automorphisms  }
\newtheorem{theo}{Theorem}[section]
\newtheorem{prop}{Proposition}[section]
\newtheorem{definition}{Definition}[section]
\newtheorem{expl}{Example}[section]
\newtheorem{lem}{Lemma}[section]
\newtheorem{cor}{Corollary}[section]
\newtheorem{rem}{Remark}[section]
\begin{document}
\maketitle

\begin{abstract}
We construct a special embedding of the translation quiver $\mathbb{Z}Q$ in the three-dimensional affine
space $\mathbb{R}^{3}$ where $Q$ is a finite connected acyclic quiver and $\mathbb{Z}Q$ contains a local slice whose quiver is isomorphic to
the opposite quiver $Q^{op}$ of $Q.$ Via this embedding, we show that there exists an involutive anti-automorphism  of the
translation quiver $\mathbb{Z}Q.$ As an immediate consequence, we characterize explicitly the group of cluster
automorphisms of the cluster algebras of seed $(X,Q)$, where $Q$ and $Q^{op}$ are mutation equivalent.
\end{abstract}

\maketitle

\section{Introduction}
Cluster algebras were invented by Fomin and Zelevinsky $[\ref{fz}]$ in order to understand the
canonical bases of quantized enveloping algebras associated to semi-simple Lie algebras [\ref{lusz}].
 A cluster algebra
is a commutative ring with a distinguished set of generators, called $cluster$ $variables$. The set of all cluster variables
 is constructed recursively from an initial set using a procedure
called $mutation$.

       Today the theory of cluster algebras relates with numerous fields of mathematics, for example,
 representation theory of algebras, Lie theory, Poisson geometry, Teichmüller theory and integrable systems.
Following $[\ref{ashifsh}, \ref{asdush}]$, we
 are interested in studying the involutive cluster automorphisms. The objective of this paper is to solve an open problem
 posed in [\ref{ashifsh}]; it was conjectured that if $Q$ is a finite connected acyclic quiver which is mutation equivalent to $Q^{op}$, then the
group of cluster
automorphisms of the cluster algebra of a seed $(X,Q)$ is a semi-direct product of the subgroup of the direct cluster
automorphisms with $\mathbb{Z}_{2}.$ A partial answer was given in [\ref{ashifsh}], in the case when the underlying graph of $Q$ is a tree.
We prove it here in general.

               In this paper, we construct an embedding of a repetitive translation quiver $\mathbb{Z}Q$ in $3$-space
 $\mathbb{R}^{3}$, when $Q$ is a finite connected acyclic quiver and the translation quiver contains a local slice whose quiver
 is isomorphic to $Q^{op}.$ In particular, we construct an embedding of the transjective component of the Auslander
 Reiten quiver of the cluster category $\mathcal{C}_{Q}$. We then use this construction to prove the existence of an
 involutive inverse cluster automorphism of a cluster algebra with a seed $(X,Q)$, where $Q$ is a finite connected acyclic quiver which
 is mutation equivalent to $Q^{op}.$

      Our paper is organized as follows. After an introductory section 2,
  we construct in section 3, the required embedding $\eta$ of $\mathbb{Z}Q$ in $\mathbb{R}^{3}$, such that
 if $\Sigma$
and $\widetilde{\Sigma}$ are two local slices whose quivers are respectively isomorphic to $Q$ and $Q^{op}$, then $\eta(\Sigma)$ and
$\eta(\widetilde{\Sigma})$
are symmetric with respect to a particular plane.

    In section 4, we prove that, if $Q$ is a finite acyclic quiver which is mutation equivalent to $Q^{op}$, then the
group of cluster
automorphisms of a cluster algebra of a seed $(X,Q)$ is a semi-direct product of the subgroup of the direct cluster
automorphisms with $\mathbb{Z}_{2}.$\\

\begin{center}ACKNOWLEDGMENTS
\end{center}
        This article is part of my PhD thesis, under the supervision of Ibrahim Assem and Vasilisa Shramchenko. 
I would like to thank them deeply for their patience and availability. 

\section{Cluster Automorphisms}

The notion of cluster automorphisms was introduced in
[\ref{ashifsh}] and it is connected with quiver automorphisms.
We recall that a $quiver$ is a quadruple $Q=(Q_0,Q_1,s,t)$ consisting of two sets,
$Q_0$(whose elements are called $vertices$) and $Q_1$(whose elements are called $arrows$),
and of two maps $s,t:Q_1\longrightarrow Q_0$ associating to each arrow $\alpha\in Q_1$
its $source$ $s(\alpha)$ and its $target$ $t(\alpha).$ If $i=s(\alpha)$ and $j=t(\alpha),$
we denote this situation by $ i\overset{\alpha}{\longrightarrow}
j$. Given a vertex $i$, we set  $i^{+} = \{\alpha\in Q_1| s(\alpha)=i\}$ and
$i^{-} = \{\alpha\in Q_1| t(\alpha)=i\}$.\\
Let $Q$ be a finite connected quiver without oriented cycles of length one or two.
Let $n=|Q_0|$ denote the number of vertices in $Q,$ $X =\{x_1,x_2, ..., x_{n}\}$ be a set
of $n$ variables and denote the vertices by  $Q_0 =\{1,2, ..., n\}$, where we agree that the point
$i$ corresponds to the variable $x_i.$ We consider the field $\mathcal{F}=\mathbb{Q}
(x_1,x_2, ..., x_{n})$ of rational functions in $x_1,x_2, ..., x_{n}$ (with rational coefficients).
The $cluster$ $algebra$ $\mathcal{A}=\mathcal{A}(X,Q)$ is the $\mathbb{Z}$-subalgebra of $\mathcal{F}$
defined by a set of generators obtained recursively from $X$ in the following manner. Let $k$ be such
that $1\leq k\leq n.$
We define the mutation $\mu_{x_{k},X}$ (or $\mu_{x_{k}}$ or $\mu_k$ for brevity if there is no
ambiguity) of the pair $(X,Q)$ to be a new pair $(X',Q')$, that is
$\mu_{k}(X,Q)=(X^{'},Q^{'})$, where $Q^{'}$ is the quiver obtained from $Q$
by performing the following operations:\\
$(a)$ for any path  $i\longrightarrow k\longrightarrow j$ of length two having $k$ as mid-vertex,
we insert a new arrow $i\longrightarrow j.$\\
$(b)$ all arrows incident to the vertex $k$ are reversed, \\
$(c)$ all cycles of length two are deleted.

    On the other hand, $X^{'}$ is a set of variables defined as follows.\\
$X^{'}=(X\setminus \{x_k\})\cup \{x_{k}^{'}\}$ where $x_{k}^{'}\in \mathcal{F}$ is obtained from $X$ by the
so-called $exchange$ $relation$: $$x_{k}x_{k}^{'}=\underset{\alpha\in i^{+}} \prod x_{t(\alpha)}+
\underset{\alpha\in i^{-}} \prod x_{s(\alpha)}.$$

     Let $\mathcal{X}$ be the union of all possible sets of variables obtained from
$X$ by successive mutations. Then the $cluster$ $algebra$ $\mathcal{A}=\mathcal{A}(X,Q)$ is the $\mathbb{Z}$-subalgebra of
$\mathcal{F}$
generated by $\mathcal{X}.$

 Each pair $(\tilde{X},\tilde{Q})$ obtained from $(X,Q)$ by successive mutations is
called a $seed$, and the set $\tilde{X}$ is called a $cluster$. The elements $\tilde{x}_{1},\tilde{x}_{2}, ... ,
\tilde{x}_{n}$ of a cluster $\tilde{X}$ are called $cluster$
$variables$. Each cluster is a transcendence basis for the field $\mathcal{F}$. The pair $(X,Q)$ is called
the $initial$ $seed$, and $X$ is called the $initial$ $cluster$.

    Gekhtman, Shapiro and Vainshtein showed in [\ref{geshva}] that for every seed $(\tilde{X},\tilde{Q})$
the quiver $\tilde{Q}$ is uniquely defined by the cluster $\tilde{X}$, and we use the notation $Q(\tilde{X})$
for the quiver of a given cluster $\tilde{X}$. We write $p_x$ for the point of $Q(\tilde{X})$ corresponding to
the cluster variable $x\in \tilde{X}.$
 Following [\ref{fz}] we recall that
the Laurent phenomenon asserts that each cluster variable in $\mathcal{A}$ can be expressed as a
Laurent polynomial in  $x_i$, with $1\leq i\leq n$, that is, every cluster variable is of the form

\begin{displaymath} \frac{p(x_{1},x_{2}, ... , x_{n})}{\prod\limits_{l=1}^{n} x_{l}^{d_l}},
 \end{displaymath} where $p\in \mathbb{Z}[x_{1},x_{2}, ... , x_{n}]$, and $d_l$ is a positive integer(for $1\leq l\leq n$).

As a consequence, $\mathcal{A}\subseteq \mathbb{Z}[x_1^{\pm 1},...,x_{n}^{\pm 1}]$, see [\ref{fz}] for more details.
The following definition is due to [\ref{ashifsh}].

\begin{definition}
Let $\mathcal{A}$ be a cluster algebra, and let $f:\mathcal{A}\longrightarrow \mathcal{A}$ be an automorphism of
$\mathbb{Z}$-algebras.  Then f is called cluster automorphism if there exists a seed $(X,Q)$ of $\mathcal{A}$
such that the following conditions are satisfied:\\
(CA1) $f(X)$ is a cluster;\\
(CA2) $f(X)$ is compatible with mutations, that is, for every $x\in X$, we have $f(\mu_{x,X}(x))=
\mu_{f(x),f(X)}(f(x)).$

\end{definition}

Now we state an equivalent characterization of cluster automorphisms see [\ref{ashifsh},Lemma 2.3].

\begin{lem}
Let $f$ be a $\mathbb{Z}$-algebra automorphism of $\mathcal{A}.$ Then f is a cluster automorphism if and only if
there exists a seed $(X,Q)$ such that $f(X)$ is a  cluster and one of the following two conditions is satisfied:
\\
(a) there exists an isomorphism of quivers $\varphi:Q\longrightarrow Q(f(X))$ such that $\varphi(p_{x})=p_{f(x)}$
for all $p_{x}\in Q_0$, or \\
(b) there exists an isomorphism of quivers $\varphi:Q^{op}\longrightarrow Q(f(X))$ such that $\varphi(p_{x})=p_{f(x)}$
for all $p_{x}\in Q_0$.
\end{lem}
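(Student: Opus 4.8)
The plan is to prove both implications, dealing with the easier converse first. Assume $f(X)$ is a cluster and, say, condition (a) holds via a quiver isomorphism $\varphi\colon Q\to Q(f(X))$ with $\varphi(p_x)=p_{f(x)}$. Then (CA1) is immediate, and for (CA2) I would apply the ring homomorphism $f$ to the exchange relation at a cluster variable $x\in X$ with point $i=p_x$. Since $\varphi$ matches the arrows incident to $p_x$ with those incident to $\varphi(p_x)=p_{f(x)}$ while preserving orientation, the image under $f$ of the right-hand side of the exchange relation at $x$ is exactly the right-hand side of the exchange relation at $f(x)$ in the seed $(f(X),Q(f(X)))$; dividing by $f(x)$ yields $f(\mu_{x,X}(x))=\mu_{f(x),f(X)}(f(x))$. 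If instead (b) holds, I would use that reversing all arrows interchanges $i^{+}$ and $i^{-}$, hence swaps the two monomials of the exchange relation; as the relation is a sum, it is unchanged, and the same computation gives (CA2).

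For the forward implication, assume $f$ is a cluster automorphism with a seed $(X,Q)$ satisfying (CA1)--(CA2), and write $\sigma$ for the vertex bijection $p_x\mapsto p_{f(x)}$ from $Q_0$ to the vertex set of $Q(f(X))$. Fixing $x\in X$ with $i=p_x$, I would apply $f$ to the exchange relation
\[
x\,\mu_{x,X}(x)=\prod_{\alpha\in i^{+}}x_{t(\alpha)}+\prod_{\alpha\in i^{-}}x_{s(\alpha)}
\]
and use (CA2) to identify the left-hand side with $f(x)\,\mu_{f(x),f(X)}(f(x))$, that is, with the exchange relation at $\sigma(i)$ in $Q(f(X))$. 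Since the cluster $f(X)$ is a transcendence basis of $\mathcal{F}$, its elements are algebraically independent; moreover, because $Q$ has no $2$-cycles, the two monomials of an exchange binomial involve disjoint variables and are in particular coprime, so the decomposition of a sum of two such monomials is unique up to their interchange. Comparing the two expressions therefore forces, at the vertex $i$, either that the arrows out of $i$ correspond under $\sigma$ to the arrows out of $\sigma(i)$ (orientation preserved), or that they correspond to the arrows into $\sigma(i)$ (orientation reversed), together with the matching statement for $i^{-}$.

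It remains to show that this preserve/reverse alternative is the same at every vertex, and here I would invoke the absence of $2$-cycles together with the connectedness of $Q$: if $i$ and $i'$ are joined by an arrow while $\sigma$ preserved orientation at $i$ but reversed it at $i'$, then this arrow would be sent simultaneously to an arrow $\sigma(i)\to\sigma(i')$ and to an arrow $\sigma(i')\to\sigma(i)$ in $Q(f(X))$, producing a forbidden $2$-cycle. Thus adjacent vertices share the same status, and by connectedness the status is global. If $\sigma$ preserves orientation everywhere it extends to an isomorphism $\varphi\colon Q\to Q(f(X))$ with $\varphi(p_x)=p_{f(x)}$, giving (a); if it reverses everywhere it extends likewise to an isomorphism $Q^{op}\to Q(f(X))$, giving (b). The main obstacle I anticipate is making the monomial comparison fully rigorous---confirming the coprimality of the exchange monomials and the resulting uniqueness so that the local dichotomy is genuinely forced---after which the $2$-cycle and connectedness argument is routine.
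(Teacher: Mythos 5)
The paper does not actually prove this lemma: it states it as a quotation of Lemma~2.3 of Assem--Schiffler--Shramchenko (\emph{Cluster automorphisms}, reference [3] of the paper), so there is no internal proof to compare yours against. Judged on its own merits, your argument is correct, and it is in substance the standard proof from that reference: the converse direction transports the exchange relation through the quiver isomorphism (noting that in case (b) reversing all arrows merely swaps the two monomials of the sum), and the forward direction applies $f$ to an exchange relation, uses (CA2) to identify the result with the exchange relation at $p_{f(x)}$ in $Q(f(X))$, and then exploits the algebraic independence of the cluster $f(X)$ to match the two binomials term by term, forcing at each vertex the preserve/reverse alternative, which is then globalized by the no-$2$-cycle argument and connectedness. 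Two points are worth making explicit in a final write-up. First, equality of monomials in algebraically independent variables gives equality of \emph{multisets} of variables, which is exactly what you need so that arrow multiplicities (not just adjacency) are matched, letting the vertex bijection $p_x\mapsto p_{f(x)}$ extend to a genuine quiver isomorphism. Second, your propagation step tacitly needs that at a vertex with at least one neighbour the two alternatives are mutually exclusive; this follows from the same $2$-cycle argument (if both held at such a vertex, $Q(f(X))$ would contain a $2$-cycle at $\sigma(i)$), and since $Q$ is connected, isolated vertices occur only in the trivial case $|Q_0|=1$. With these remarks your coprimality/disjointness observation does all the work it is asked to do, and the proof is complete.
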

A cluster automorphism $f$ is called $direct$ if it satisfies condition $(a)$ of the above lemma, and $f$ is called
$inverse$ if it satisfies condition $(b)$.

  The set of all cluster automorphisms forms a group under composition denoted
by Aut$(\mathcal{A})$ and called the group of $cluster$ $automorphisms$. The set of all direct automorphisms is denoted by
Aut$^{+}(\mathcal{A})$; it is a normal subgroup of Aut$(\mathcal{A})$ of index at most two.
For more results and properties of cluster automorphisms, we refer the reader to [\ref{ashifsh}].

 \section{Involutive automorphisms of quivers}
In this section, we assume that $Q$ is a finite connected acyclic quiver.
Let $K$ be an algebraically closed
field, $KQ$ the path algebra of $Q$ and mod$KQ$ the category of finitely generated right  $KQ$-modules. For the properties of
mod$KQ$ and its Auslander-Reiten quiver $\Gamma$(mod$KQ$) we refer the reader to [\ref{auresm}, \ref{asisk}].
We denote by $\mathcal{D}^{d}$(mod$KQ$) the bounded derived category over mod$KQ.$ This is a triangulated Krull-Schmidt category
having a Serre duality and hence almost split triangles. Since $KQ$ is hereditary, the Auslander Reiten quiver
$\Gamma(\mathcal{D}^{d}$(mod$KQ$)) of
$\mathcal{D}^{d}$(mod$KQ$) is well-understood, see [\ref{ha}].

     Following [\ref{asisk}] we give the definitions and some basic properties of a well-known translation quiver.
If $Q$ is a connected acyclic quiver, the $repetitive$ $quiver$ $\mathbb{Z}Q$ is the quiver whose set of vertices
is $(\mathbb{Z}Q)_{0}=\{(m,\nu)| m\in \mathbb{Z} , \nu\in Q_0\}$  and the set of arrows is $(\mathbb{Z}Q)_{1}=\{(m,\alpha):
(m,\nu)\rightarrow (m,\nu')| m\in \mathbb{Z} , \alpha:\nu\rightarrow \nu'\}\cup \{(m,\alpha'):
(m,\nu')\rightarrow (m-1,\nu)| m\in \mathbb{Z} , \alpha:\nu\rightarrow \nu'\}$. The map $\tau: \mathbb{Z}Q\longrightarrow \mathbb{Z}Q$ defined
 by $\tau(m,\nu)=(m-1,\nu)$ is called the
translation of the quiver $\mathbb{Z}Q$. The pair $(\mathbb{Z}Q,\tau)$ is
a $translation$ $quiver,$ for more details we refer to [\ref{asisk}]. The translation quiver $\mathbb{Z}Q$ is stable because its translation is
defined everywhere.
We recall that an automorphism of a translation quiver is a quiver automorphism commuting with the translation $\tau$.

    The quiver $\Gamma(\mathcal{D}^{d}$(mod$KQ$)) consists of two types of components: the regular
components and the transjective components, the latter are of the form $\mathbb{Z}Q$, see[\ref{ha}].
The $cluster$ $category$ $\mathcal{C}_{Q}$ of
Buan-Marsh-Reineke-Reiten-Todorov
 is defined to be the orbit category of
$\mathcal{D}^{d}$(mod$KQ$) under the action of the automorphism $\tau^{-1}[1]$, where $\tau$ is the Auslander-Reiten translation
and $[1]$ is
the shift of $\mathcal{D}^{d}$(mod$KQ$), see [\ref{bmrrt}]. Then $\mathcal{C}_{Q}$ is also a triangulated Krull-Schmidt category
having almost split triangles. Moreover, $\mathcal{C}_{Q}$ is a 2-Calabi-Yau category [\ref{bmrrt}].
 The Auslander-Reiten quiver $\Gamma(\mathcal{C}_{Q})$ of $\mathcal{C}_{Q}$ is the quotient of
$\Gamma(\mathcal{D}^{d}$(mod$KQ$)) under the action of the quiver automorphism induced by $\tau^{-1}[1]$.
Then, the combinatorics of cluster
variables is encoded in the cluster category.
That is, if $\mathcal{A}=\mathcal{A}(X,Q)$ is a cluster algebra, with $Q$ an acyclic
quiver having $n=|Q_0|$ points, there exists a map $X_{?}:\mathcal{C}_{Q}\longrightarrow \mathbb{Z}[x_1^{\pm 1},...,x_{n}^{\pm 1}]$
called the canonical $cluster$ $character$, or the $Caldero-Chapoton$ $map$. The map $X_{?}$ induces a bijection between the indecomposables
 $M$(up to isomorphism) in $\mathcal{C}_{Q}$ which have no self-extensions and the cluster variables $X_{M}$, see [\ref{cc},\ref{ck}].
 Under this
 bijection, clusters correspond to cluster-tilting objects  in $\mathcal{C}_{Q}$.

   It has been shown in [\ref{ashifsh}] that, if $\mathcal{A}$ is a cluster algebra with a seed $(X,Q)$ such that the underlying
graph of $Q$ is a tree, then there exists an inverse cluster automorphism $\sigma \in$ Aut$(\mathcal{A})$ of order two and therefore
Aut$(\mathcal{A})$=Aut$^{+}(\mathcal{A})\rtimes \mathbb{Z}_2$. In this section we show that this statement is valid for
any acyclic quiver which is mutation equivalent to its
opposite quiver.

      Let $j$ be an integer, a subquiver of $\mathbb{Z}Q$ of the form $(j,Q)$ is a full subquiver of $\mathbb{Z}Q$ whose set of vertices is
$(j,Q)_{0}=\{(j,\nu)| \nu\in Q_0\}$.

         We denote by $C^{j}$ the subquiver of the quiver $\mathbb{Z}Q$ of the form $(j,Q)$.

         The notion of $local$ $slice$ was introduced in [\ref{asbrsh}] for the connected components of Auslander-Reiten quivers of the
categories of modules. For the repetitive quiver $\mathbb{Z}Q$, the notion of local slice coincides with the notion of $section$ as defined
in [\ref{asisk}, p.302].

    We consider the $3$-dimensional Euclidean space $\mathbb{R}^{3}$  with coordinates $(x,y,z)$, where the frame of reference has a
negative orientation as shown in Fig 1.\\

Our main result will be obtained as a corollary of the following theorem.

\begin{theo}
Let $Q$ be a finite connected acyclic quiver such that the translation quiver $\mathbb{Z}Q$ contains two local slices $\Sigma$
and $\widetilde{\Sigma}$
whose quivers are respectively isomorphic  to $Q$ and $Q^{op}.$ Then there is an embedding $\eta:\mathbb{Z}Q\longrightarrow
\mathbb{R}^{3}$ and an oblique reflection with respect to the plane $(xOz)$ which maps $\eta(\Sigma)$ and $\eta(\widetilde{\Sigma})$
 to each other.

\end{theo}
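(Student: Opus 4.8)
The plan is to realize $\eta$ as an affine map that turns the combinatorial translation $\tau$ into an honest translation of $\mathbb{R}^{3}$. Concretely, I would fix a vector $\vec{u}\in\mathbb{R}^{3}$ and a placement $\vec{\phi}\colon Q_{0}\to\mathbb{R}^{3}$ of the vertices of a single slice, and set $\eta(m,\nu)=m\,\vec{u}+\vec{\phi}(\nu)$, drawing each arrow of $\mathbb{Z}Q$ as the straight segment between the images of its endpoints. With this choice $\eta(\tau(m,\nu))=\eta(m,\nu)-\vec{u}$, so $\tau$ becomes translation by $-\vec{u}$; in particular $\eta$ is automatically compatible with the translation structure, which is what lets sections behave well under $\eta$. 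After translating the configuration I may assume $\Sigma=C^{0}=(0,Q)$, so that $\eta(\Sigma)=\vec{\phi}(Q_{0})$ is literally a copy of $Q$, and the whole problem reduces to choosing $\vec{u}$ and $\vec{\phi}$.

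Next I would record the position of the given slice $\widetilde{\Sigma}$ relative to $\Sigma$. Since $\widetilde{\Sigma}$ is a section it meets the $\tau$-orbit of each $\nu\in Q_{0}$ in exactly one vertex, say $(c(\nu),\nu)$, which defines an integer-valued function $c$ on $Q_{0}$; the condition $\widetilde{\Sigma}\cong Q^{op}$ forces the arrows of $\widetilde{\Sigma}$ to be the backward (cross-level) arrows of $\mathbb{Z}Q$, which pins down $c$ up to the additive structure along the arrows of $Q$. I would then build the symmetry into the coordinates by decreeing that the second ($y$) coordinate of the base slice equals $c$, i.e. $\vec{\phi}(\nu)_{2}=c(\nu)$ up to a fixed scalar, and by taking $\vec{u}$ with $u_{2}=-2$. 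Then $\eta(0,\nu)$ and $\eta(c(\nu),\nu)$ have opposite $y$-coordinates $c(\nu)$ and $-c(\nu)$, so $\eta(\Sigma)$ and $\eta(\widetilde{\Sigma})$ sit on opposite sides of the plane $(xOz)=\{y=0\}$, with the shared vertices (those with $c(\nu)=0$) lying on the plane itself.

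The reflection is then forced: let $\rho$ be the oblique reflection across $\{y=0\}$ whose direction of reflection is $\vec{u}$, namely $\rho(p)=p-2\,(p_{2}/u_{2})\,\vec{u}$. A direct check gives $\rho(\eta(0,\nu))=\eta(0,\nu)+c(\nu)\,\vec{u}=\eta(c(\nu),\nu)$, so $\rho(\eta(\Sigma))=\eta(\widetilde{\Sigma})$; since $\rho$ is an involution it also sends $\eta(\widetilde{\Sigma})$ back to $\eta(\Sigma)$, i.e. it maps the two images to each other. Because $\rho$ is affine it carries the segment of an arrow of $\Sigma$ to the segment of the corresponding arrow of $\widetilde{\Sigma}$, and since a reflection reverses orientation the image slice carries the reversed arrows, matching $\Sigma\cong Q$ with $\widetilde{\Sigma}\cong Q^{op}$. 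Note that $\rho$ is genuinely \emph{oblique} precisely because $\vec{u}$ is not perpendicular to $(xOz)$: the two slices differ by the $\tau$-shift $c(\nu)$ measured along $\vec{u}$, not by a perpendicular flip, so the slant of the reflection is unavoidable.

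The hard part is to make $\eta$ an embedding for an arbitrary finite connected acyclic $Q$, and this is exactly where the third dimension is needed. For $Q$ whose underlying graph is a tree the slice $\vec{\phi}(Q_{0})$ can be drawn in a plane and $\eta$ lands essentially in $\mathbb{R}^{2}$, recovering the known tree case; but when the underlying graph has cycles, or when $Q$ is not gradable, any planar mesh is forced to have coinciding vertices or crossing arrows. I would resolve this by using the remaining freedom in the first and third coordinates of $\vec{\phi}$ and in $u_{1},u_{3}$: injectivity of $\eta$ amounts to requiring that no difference $\vec{\phi}(\nu')-\vec{\phi}(\nu)$ with $\nu\neq\nu'$ be an integer multiple of $\vec{u}$, and distinctness of the drawn arrows amounts to a finite list of genericity conditions asserting that two segments meet only at a common endpoint; both hold for a generic choice, which a dimension count shows is possible in $\mathbb{R}^{3}$ but not in $\mathbb{R}^{2}$. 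The point requiring care, and which I expect to be the main obstacle, is to verify that these genericity choices can be made simultaneously with the constraint $\vec{\phi}(\nu)_{2}=c(\nu)$ imposed above, so that injectivity and the mirror symmetry of $\Sigma$ and $\widetilde{\Sigma}$ coexist; this reduces to checking that fixing the $y$-coordinates still leaves the $(x,z)$-coordinates free enough to avoid the measure-zero degenerate loci.
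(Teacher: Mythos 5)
Your reflection mechanism is exactly the paper's: you place a base slice with $y$-coordinate equal to the level function $c$, extend equivariantly so that $\tau$ becomes translation by a vector $\vec{u}$ with $u_{2}=-2$ (the paper takes $\vec{u}=(-1,-2,0)$), and observe that the oblique reflection across $\{y=0\}$ with direction $\vec{u}$, namely $p\mapsto p-2(p_{2}/u_{2})\vec{u}$, carries $\eta(0,\nu)$ to $\eta(c(\nu),\nu)$; this is precisely steps $(b)$ and $(c)$ of the paper's proof, including its formula $S(a,b,c)=(a-b,-b,c)$. One incidental claim of yours is false, however: an affine reflection does not ``reverse the arrows''. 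It sends the directed segment from $P$ to $P'$ to the directed segment from $\rho(P)$ to $\rho(P')$, and the paper's Remark 3.1 and Example 3.1 exhibit exactly this failure: arrows joining two vertices with $c(\nu)=c(\nu')$ keep their orientation, so $S$ need not induce an anti-morphism of quivers. The theorem survives because it only asserts that the reflection exchanges the two images as configurations, not that it is an anti-isomorphism (that stronger statement is the paper's Proposition 4.1 and requires the quiver-with-length hypothesis).

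The genuine gap is in your embedding argument. Precisely because you must impose $\vec{\phi}(\nu)_{2}=c(\nu)$, every arrow $\alpha:\nu\to\nu'$ of $Q$ with $c(\nu)=c(\nu')$ --- and such arrows exist whenever $Q$ has parallel paths of unequal length, e.g. the quiver of the paper's Example 3.1 --- is forced to be drawn as a segment lying inside a single plane $y=\mathrm{const}$, together with the segments of every translated copy $\eta(C^{m})$ landing in that same plane. For two coplanar segments, crossing is an open, stable condition, not a measure-zero one, so no dimension count in the remaining $(x,z)$-coordinates can dispose of it; ``generic choice'' therefore does not yield an embedding. What is actually needed is (i) a combinatorial argument that each level subquiver (the full subquiver of $Q$ on $\{\nu \mid c(\nu)=j\}$) admits a straight-line planar drawing, and (ii) a device preventing the several copies sharing one plane from overlapping. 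The paper addresses (ii) by confining $\eta(\Sigma)$ to the slab $-1<x\leq 0$, so that the translates $t_{m\vec{u}}(\eta(\Sigma))$ occupy disjoint unit strips, and addresses (i) by its explicit plane-by-plane construction: the sets $L_{j}$ of vertices sharing a target are placed on lines $\Delta_{j}$ parallel to $(Ox)$, and no two image points may span a line parallel to $(OD_{0})$. This non-generic, within-plane bookkeeping is where the paper's proof spends essentially all of its effort (steps $(a_{i})$ through $(a_{iii})$), and it is exactly the part your sketch delegates to genericity, where genericity is unavailable.
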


\begin{proof}

The translation quiver $\mathbb{Z}Q$ contains two local slices $\Sigma$ and $\widetilde{\Sigma}$
whose quivers are isomorphic respectively to $Q$ and $Q^{op}.$
Since the translation quiver $\mathbb{Z}Q$ is stable, we can assume without loss of generality that $\Sigma$ and $\widetilde{\Sigma}$
are such that  $\Sigma\cap
\widetilde{\Sigma}\neq \emptyset$ and $\Sigma$ is chosen to be the copy of $Q$ at zero, that is $\Sigma=(0,Q)=C^{0}.$

Because $\widetilde{\Sigma}$ is a subquiver of $\mathbb{Z}Q$ and $\Sigma_{0}\cap \widetilde{\Sigma}_{0}\subset C^{0},$
there exists a maximal nonnegative integer $k$, such that $C^{-j}\cap
\widetilde{\Sigma}\neq \emptyset$ for all $j$ with $0\leq j\leq k$.
We use the following notation for the vertices of $\Sigma$: the vertex $(0,\nu_i)$ is denoted by $(0,\nu_{ji})$, where $j$ is such that
the vertex $(-j,\nu_i)$ belongs to $\widetilde{\Sigma},$ for
$0\leq j\leq k.$  We extend this notation to the translation quiver $\mathbb{Z}Q$ that is $(-m,\nu_{ji})=\tau^{m}(0,\nu_{ji}).$
We denote by $s_{j}$, where $0\leq j\leq k,$ the number of vertices $(0,\nu_{ji})$. Thus we have $\sum\limits_{j=0}^{k} s_{j}=|Q_0|.$

     The proof is completed in the following three steps $(a)$ $(b)$ $(c)$.

$(a)$ If $A_1, A_2,..., A_h$ are points of $\mathbb{R}^{3}$, we denote by $[A_1A_2...A_h]$ the convex hull of the points
$A_1, A_2,..., A_h.$

   Let $n=|Q_0|$ and define $\mathbb{P}_{nk}$ to be the convex hull of
the points $O,D_{0}, D_{k},E_{k},E_{0}^{n},E_{k}^{n},D_{0}^{n}, D_{k}^{n}$ whose coordinates are given by:

$O=(0,0,0)$, $D_{0}=(-1,0,0),$ $D_{k}=(-1,k,0),$ $E_{k}=(0,k,0),$
$E_{0}^{n}=(0,0,n),$ $E_{k}^{n}=(0,k,n),$ $D_{0}^{n}=(-1,0,n),$ $D_{k}^{n}=(-1,k,n)$; then $\mathbb{P}_{nk}$ is a rectangular
parallelepiped (see Fig.1).

            \begin{figure}
\includegraphics{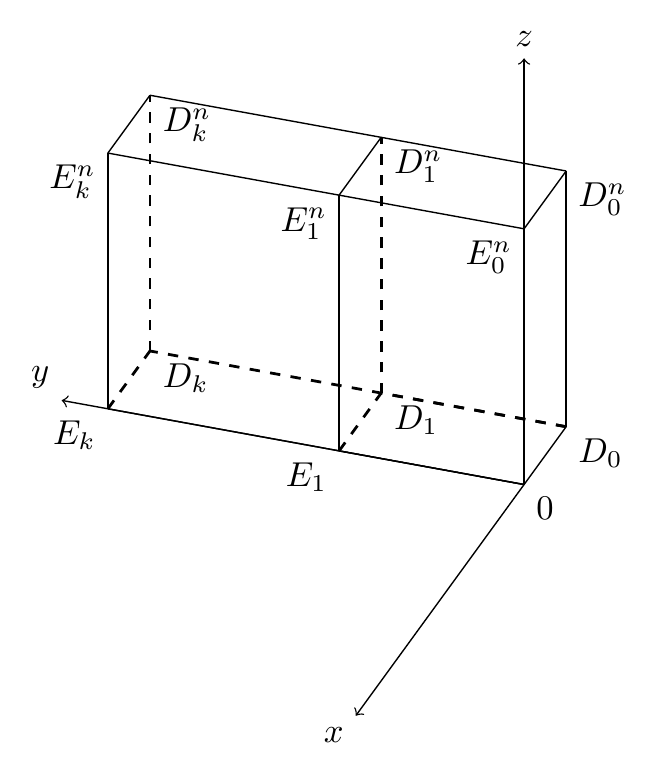}
\center{Fig.1} \end{figure}

             We want to embed $\Sigma$ in $\mathbb{R}^{3}$ in such a way that the vertices of the embedded quiver $\eta(\Sigma)$
belong to $\mathbb{P}_{nk}\setminus [D_{0}D_{k}D_{k}^{n}D_{0}^{n}]$ that is, to the
parallelepiped $\mathbb{P}_{nk}$ from which we remove the face $D_{0}D_{k}D_{k}^{n}D_{0}^{n}$. We set $H=(0,0,n-1)$ and $T=(-1,0,n-1).$
Let $(\mathcal{P}_{m})_{m\in \mathbb{Z}}$ be the family of parallel
planes in $\mathbb{R}^{3}$ defined by the equations $\mathcal{P}_{m}:y=m$.
We decompose $\Sigma$ into $k+1$ non-intersecting subquivers $\Sigma^{j}$ and construct the embedding of each subquiver separately.

         The construction of  $\eta(\Sigma)$ is thus done in three steps $(a_i),$ $(a_{ii})$ and $(a_{iii})$ as follows.

    $(a_i)$ Construction of
$\eta(\Sigma^{0})$, where $\Sigma^{0}=\Sigma\cap \widetilde{\Sigma}$. Then
 $\Sigma^{0}$ is the full subquiver of $\Sigma$ whose set of vertices is
$\Sigma^{0}_{0}=\{(0,\nu_{0l})| 1\leq l\leq s_0\}$.
We map the vertices $(0,\nu_{01})$, $(0,\nu_{02})$,..., $(0,\nu_{0s_0})$ of $\Sigma$,
to distinct  points $A_{01}$, $A_{02}$,..., $A_{0s_0}$ in $[E_{0}^{n}D_{0}^{n}TH]\setminus [TD_{0}^{n}]$ such that for all $i,j\in \{1,2,...,s_0\}$ the line
$(A_{0i}A_{0j})$ is not parallel to a line $(OD_0)$.
Each arrow $\alpha$ of $\Sigma$ from the vertex $(0,\nu_{0i})$ to the
vertex $(0,\nu_{0j})$ is mapped to
an arrow $\bar{\alpha}$ from $A_{0i}$ to $A_{0j}$.
In other words, we set
$\eta(0,\nu_{0i})=A_{0i}$ with $1\leq i\leq s_0$ and $\eta(\alpha)=\bar{\alpha}.$ The quiver $\eta(\Sigma^{0})$ obtained in the plane
$\mathcal{P}_{0}$ is isomorphic to $\Sigma^{0},$ we call it an embedding of $\Sigma^{0}$.

    $(a_{ii})$ Now we construct $\eta(\Sigma^{1})$, where $\Sigma^{1}$ is the full subquiver of $\Sigma$ whose set of vertices is
$\Sigma^{1}_{0}=\{(0,\nu_{1l})| 1\leq l\leq s_1\}$.

         We denote by $t_{\vec{v}}$ the translation in $\mathbb{R}^{3}$ by the vector $\vec{v}=(0,1,-1).$

          Let $\Sigma^{11}_{0}$ be the set of all vertices $(0,\nu_{1l})$ of $\Sigma^{1}_{0}$ such that there exists at least one arrow from
a vertex $(0,\nu_{0i})$ in $\Sigma^{0}$ to
the vertex $(-1,\nu_{1l})$. And denote by $\Sigma^{12}_{0}$ the complement of
$\Sigma^{11}_{0}$ in $\Sigma^{1}_{0};$ then $\Sigma^{12}_{0}$ is the set of the elements $(0,\nu_{1l})$ in $\Sigma^{1}_{0}$ such that there is no arrow from
$\Sigma^{0}$ to $(-1,\nu_{1l})$.  Since the quiver $\Sigma$
is connected, we have $\Sigma^{11}_{0}\neq \emptyset$.

  For the construction of the embedding of the set $\Sigma^{11}_{0}$, we divide $\Sigma^{11}_{0}$ into two families of subsets as follows.

           The first family consists of the non-intersecting subsets $L_1$, $L_2,$...,$L_h$ of $\Sigma^{11}_{0}$ such that $|L_j|\geq 2$ and for
some $(0,\nu_{0i_{j}})\in \Sigma^{0}_{0}$ there exists an arrow to $(0,\nu_{0i_{j}})$ from each element $L_j$. The second family is given by the
remaining elements of $\Sigma^{11}_{0}$, the elements which do not belong to any set $L_j$ with $1\leq j\leq h$.

           Now we construct first the embedding of the elements of the set $\Sigma^{11}_{0}\setminus (L_{1}\cup L_{2}...\cup L_{h})$. For an element $(0,\nu_{1r})$
of $\Sigma^{11}_{0}\setminus (L_{1}\cup L_{2}...\cup L_{h})$, we take any arrow originating in $(0,\nu_{1r})$ whose target is
$(0,\nu_{0i_{r}})\in \Sigma^{0}_{0}.$  Then the vertex $(0,\nu_{1r})$ is mapped to the point $A_{1r}:=t_{\vec{v}}(A_{0i_{r}}).$

             For the construction of the embedding of the elements of the set $L_j$ with $1\leq j\leq h$ , we choose an element $(0,\nu_{1l_{j}})$ in $L_j$ and we
map it to a point $A_{1l_{j}}:=t_{\vec{v}}(A_{0i_{j}})$ with $A_{0i_{j}}=\eta(0,\nu_{0i_{j}})$. In order to define the embedding for the remaining elements of
$L_{j}$, we proceed as follows.
Let $\Delta_j$ be
the line parallel to $(Ox)$ passing through $A_{1l_{j}}$ and let $[FG]$ be the intersection of $\Delta_j$ and the rectangle $[E_{1}D_{1}D_{1}^{n}E_{1}^{n}]$
(we recall that $A_{1l_{j}}\in [E_{1}D_{1}D_{1}^{n}E_{1}^{n}]$).
The elements of $L_j\setminus \{(0,\nu_{1l_{j}})\}$ are mapped to arbitrary distinct points in $[GF]\setminus \{G,F,A_{1l_{j}}\}.$
The elements of $L_i$, $i\neq j$, $1\leq i\leq h$ are mapped to the distinct point of the line $\Delta_i$ constructed as above. We have $\Delta_i\cap \Delta_j=\emptyset$, for $i\neq j.$

          It remains to construct the elements of the set $\Sigma^{12}_{0}$.
We map the elements of $\Sigma^{12}_{0}$ to distinct points
(distinct from the points associated with the elements of $\Sigma^{11}_{0}$) in
$t_{\vec{v}}([E_{0}^{n}D_{0}^{n}TH]\setminus [TD_{0}^{n}])$ such that any line containing at least two of them is not parallel to $(OD_0).$

     Each arrow $\alpha$ of $\Sigma$ from the vertex $(0,\nu_{1r})\in \Sigma^{12}_{0}$ to the
vertex $(0,\nu_{1l})\in \Sigma^{1}_{0}$ is mapped to
an arrow $\bar{\alpha}$ from $A_{1r}=\eta(0,\nu_{1r})$ to $A_{1l}=\eta(0,\nu_{1l})$ and each arrow $\beta$ of $\Sigma$ from the vertex $(0,\nu_{1j})\in \Sigma^{11}_{0}$ to the
vertex $(0,\nu_{0i})\in \Sigma^{0}_{0}$ is mapped to
an arrow $\bar{\beta}$ from $A_{1j}=\eta(0,\nu_{1j})$ to $A_{0i}=\eta(0,\nu_{1i}).$

   The obtained quiver $\eta(\Sigma^{1})$ is an embedding of the subquiver
$\Sigma^{1}$ into the plane
$\mathcal{P}_{1}:y=1.$ More precisely, $\eta(\Sigma^{1})$ is constructed in the rectangle $\mathcal{P}_{1}\cap \mathbb{P}_{nk}$.

      Thus, we have constructed the quiver $\eta(\overline{\Sigma^{0}\cup \Sigma^{1}})$, where $\overline{\Sigma^{0}\cup \Sigma^{1}}$ is the full subquiver of $\Sigma$ whose set of
vertices is $\Sigma^{0}_{0}\cup \Sigma^{1}_{0}.$

    $(a_{iii})$ Construction of $\eta(\Sigma^{j})$, where $\Sigma^{j}$, $2\leq j\leq k$ is the full subquiver of $\Sigma$ whose set of vertices is
$\Sigma^{j}_{0}=\{(0,\nu_{jl})| 1\leq l\leq s_j\}$.

     We construct $\eta(\Sigma^{j})$ assuming that $\eta(\Sigma^{j-1})$ is constructed.

     We reproduce the construction of step $a_{(ii)}$ with $\Sigma^{j}$ playing the role of $\Sigma^{1}$ and $\Sigma^{j-1}$ playing the role of $\Sigma^{0}.$
The vertices of $\Sigma^{j}$ are mapped by $\eta$ to the rectangle $[E_{j}D_{j}D_{j}^{n}E_{j}^{n}]\setminus [D_{j}D_{j}^{n}]$ and the arrows between two vertices $\nu$ and $\zeta$ of
$\Sigma_{0}$ are mapped to arrows between the corresponding images $\eta(\nu)$ and $\eta(\zeta).$  Therefore, we construct the embedding $\eta(\Sigma^{j})$ of $\Sigma^{j}$ into
the plane $\mathcal{P}_{j}:y=j$ for $0\leq j\leq k,$  more precisely $\eta(\Sigma^{j})\in \mathcal{P}_{j}\cap \mathbb{P}_{nk}.$ Thus, we have constructed the quiver
$\eta(\overline{\Sigma^{0}\cup \Sigma^{1}...\cup \Sigma^{j}})$, where
$\overline{\Sigma^{0}\cup \Sigma^{1}...\cup \Sigma^{j}}$ is the full subquiver of $\Sigma$ whose set of vertices is $\Sigma^{0}_{0}\cup \Sigma^{1}_{0}...\cup \Sigma^{j}_{0}.$
In other words, we have constructed the embedding of $\overline{\Sigma^{0}\cup \Sigma^{1}...\cup \Sigma^{j}}$ in
$[OD_{0}D_{0}^{n}E_{0}^{n}E_{j}^{n}E_{j}D_{j}D_{j}^{n}]\setminus [D_{0}D_{0}^{n}D_{j}^{n}D_{j}].$

    Since $\Sigma$ is finite, the process ends after $k$ iterations and we find the required embedded quiver $\eta(\Sigma)$. By construction we have
$\eta(\Sigma)=\eta(\overline{\Sigma^{0}\cup \Sigma^{1}...\cup \Sigma^{k}})$ and $|\eta(\Sigma)_{0}|=|\Sigma_{0}|.$
Also, we have an isomorphism of quivers $\eta(\Sigma)\cong \Sigma$ and  $\eta(\Sigma)$ is an embedding of $\Sigma$ in
$[OD_{0}D_{0}^{n}E_{0}^{n}E_{1}^{n}E_{1}D_{1}D_{1}^{n}]\setminus [D_{0}D_{0}^{n}D_{1}^{n}D_{1}]$.

   $(b)$ Construction of the quiver $\eta(\mathbb{Z}Q)$ in $\mathbb{R}^{3}.$
To define $\eta(\mathbb{Z}Q)$ in $\mathbb{R}^{3}$, we need first to construct each $\eta(C^{m})$, with $m\in \mathbb{Z}$, where $C^{m}$ is a copy of the quiver $Q.$

         Let $\vec{u}=(-1,-2,0)$ and $t_{\vec{u}}$ be the translation in $\mathbb{R}^{3}$ by vector $\vec{u}$.

      We extend $\eta$ to each copy $C^{m}$, with $m\in \mathbb{Z},$ by $\eta(C^{m})=t_{m\vec{u}}(\eta(\Sigma)).$ We complete the construction of $\eta(\mathbb{Z}Q)$
as one would do for the repetitive quiver $\mathbb{Z}Q$ using that $\eta(\tau)=t_{\vec{u}}$ . By the construction, there are no arrows which cross each other when
considered as segments in $\mathbb{R}^{3}$.
Thus we have extended the embedding $\eta$ to $\mathbb{Z}Q.$

       $(c)$ Finally we show that $\eta(\widetilde{\Sigma})$ is the image of $\eta(\Sigma)$ under the oblique reflection $S$ with respect to the plane $\mathcal{P}_{0},$
and the direction given by $\vec{u}$.

    If $A=(a,b,c)$ is a point of $\mathbb{R}^{3}$, then the image of $A$ under the oblique reflection $S$ is the point $S(A)=(a-b,-b,c).$

       Let $A_{jr}=(a_{jr},b_{jr},c_{jr})$ be a point of $\eta(\Sigma)$ such that $\eta(0,\nu_{jr})=A_{jr}$, with $(0,\nu_{jr})\in \Sigma_{0}$.
       Since $A_{jr}\in \mathcal{P}_{j}$, then the second coordinate of $A_{jr}$ is equal to $j$, and $S(A_{jr})=(a_{jr}-j,-j,c_{jr}).$

               By the definition of $\eta,$ we have $\eta(\tau)=t_{\vec{u}}$ and $\eta(\tau^{-1})=t_{-\vec{u}}.$

          Now we want to show that $\eta(\widetilde{\Sigma})=S(\eta(\Sigma)).$

         Let $N_{r}\in \widetilde{\Sigma}_0$, then $N_{r}=(-j,\nu_{jr})=\tau^{j}(0,\nu_{jr})$, with $1\leq r\leq s_{j}$, where
 $\sum\limits_{j=0}^{k} s_{j}=n.$ We have

$$\begin{array}{lllll}\eta(N_{r})&=&\eta(\tau^{j}(0,\nu_{jr}))\\ &=&t_{{j}\vec{u}}(A_{jr})\\
&=& (a_{jr}-j,-j,c_{jr})\\
&=&S(A_{jr}). \end{array}$$
Therefore $\eta(\widetilde{\Sigma}_0)\subseteq S(\eta(\Sigma_{0})).$
Since $|\eta(\widetilde{\Sigma}_0)|=|S(\eta(\Sigma_0))|$, then $\eta(\widetilde{\Sigma}_0)=S(\eta(\Sigma_0)).$
Because the quivers  $\Sigma$ and $\widetilde{\Sigma}$ are full subquivers of $\mathbb{Z}Q$, it follows that $S(\eta(\Sigma))$ and $\eta(\widetilde{\Sigma})$ are also
full subquivers of $\eta(\mathbb{Z}Q).$ Thus $S(\eta(\Sigma))=\eta(\widetilde{\Sigma})$.
This completes the proof.

\end{proof}

\begin{definition}
The quiver $\eta(\mathbb{Z}Q)$ is called a spatial representation of $\mathbb{Z}Q$ in $\mathbb{R}^{3}$.
\end{definition}

\begin{rem}

The reflection $S$ defined in $\mathbb{R}^{3}$ in step $(c)$ of the proof transforming $\eta(\Sigma)$ to $\eta(\Sigma^{op})$ does not induce
necessarily a quiver morphism, as shown in the following example.

\end{rem}

\begin{expl}
Let $Q$ be a quiver defined by:$$ \xymatrix{1\ar[rr]^{\alpha_{13}}\ar[dr]_{\alpha_{12}}&&3 \\
&2\ar[ur]_{\alpha_{23}}&}.$$
The quiver $\mathbb{Z}Q$ contains a local slice $\widetilde{\Sigma}$ isomorphic to $Q^{op}$ and that intersects the local slice $\Sigma=(0,Q)$.
The local slice $\Sigma$ is given by $$\xymatrix{(0,1)\ar[rr]^{(0,\alpha_{13})}\ar[dr]_{(0,\alpha_{12})}&&(0,3) \\
&(0,2)\ar[ur]_{(0,\alpha_{23})}&}$$ and the local slice $\widetilde{\Sigma}$ is given by $$\xymatrix{(0,3)\ar[rr]^{(0,\alpha'_{23})}\ar[dr]_{(0,\alpha'_{13})}&&(-1,2) \\
&(-1,1)\ar[ur]_{(-1,\alpha_{12})}&}.$$ We set $\eta(0,3)=(0,0,2)$, $\eta(0,2)=(0,1,1)$ and $\eta(0,1)=(0,1,0)$. We have $S(\eta(0,3))=(0,0,2)$, $S(\eta(0,2))=\eta(-1,2)$ and $S(\eta(0,1))=\eta(-1,1)$.
 The reflection $S$ maps the arrow $\eta(0,\alpha_{12}):\eta(0,1)\rightarrow \eta(0,2)$ to the arrow $\eta(-1,\alpha_{12}):\eta(-1,1)\rightarrow \eta(-1,2),$ with
$S(\eta(0,1))=\eta(-1,1)$ and $S(\eta(0,2))=\eta(-1,2).$ Thus $S$ does not induce an anti-morphism.\\
The reflection $S$ does not induce a quiver morphism, because for the arrow $\eta(0,\alpha_{23}):\eta(0,2)\rightarrow \eta(0,3)$ of $\eta(\Sigma)$, we have $S(\eta(0,\alpha_{23}))=\eta(0,\alpha'_{23}):\eta(0,3)\rightarrow \eta(-1,2).$
\end{expl}

\section{Involutive cluster automorphisms}

Before giving a first consequence of our result, we need to recall the notion of quiver with length appearing in [\ref{boga}, \ref{chpltr}].

Let $Q$ be a quiver, and let $\omega=\alpha^{\varepsilon_1}_{1}\alpha^{\varepsilon_2}_{2}...\alpha^{\varepsilon_s}_{s}$ be a walk, where
$\alpha_i\in Q_1$ and $\varepsilon_i\in \{-1,1\}$ for all $i$. We call $length$ of $\omega$ the integer defined by
$L(\omega)=\sum \limits_{i=1}^{s} L(\alpha^{\varepsilon_i}_{i})$, where $L(\alpha_i)=1$ and $L(\alpha^{-1}_i)=-1$ for all $i$.
Two paths in $Q$ having same source and target are called $parallel$ $paths$.
We say that $Q$ is a $quiver$ $with$ $length$ when any two parallel paths in $Q$ have the same length. Let $\nu, \xi$ be two vertices of the quiver
$Q$ with length and $n$ an integer.
The length $L(\nu,\xi)$ between $\nu$ and $\xi$ is $n$ if there exists a path from $\nu$ to $\xi$ of length $n.$

\begin{prop}

Let $Q$ be a finite connected acyclic quiver such that the translation quiver $\mathbb{Z}Q$ contains two local slices $\Sigma$
and $\widetilde{\Sigma}$
whose quivers are respectively isomorphic  to $Q$ and $Q^{op}.$
If the quiver $Q$ is a quiver with length, then the reflection $S$ induces an anti-automorphism of $\mathbb{Z}Q.$

\end{prop}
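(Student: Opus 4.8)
The plan is to show that the map induced by $S$ through $\eta$ is a well-defined involutive bijection of $\mathbb{Z}Q$ which reverses every arrow and intertwines $\tau$ with $\tau^{-1}$; such a map is, by definition, an involutive anti-automorphism of the translation quiver. First I would record the vertex formula already obtained in the proof of the theorem. Writing $j_i$ for the level of $\nu_i$, that is the integer with $(-j_i,\nu_i)\in\widetilde{\Sigma}$, the computation $S(\eta(-m,\nu_i))=\eta(m-j_i,\nu_i)$ shows that $S$ maps the vertex set of $\eta(\mathbb{Z}Q)$ onto itself. Hence $\sigma:=\eta^{-1}\circ S\circ\eta$ is a well-defined self-map of $\mathbb{Z}Q$, given on vertices by $\sigma(-m,\nu_i)=(m-j_i,\nu_i)$. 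Since $S$ is an involution of $\mathbb{R}^3$ we get $\sigma^2=\mathrm{id}$, and a direct substitution gives $\sigma\tau=\tau^{-1}\sigma$, which is the compatibility with the translation required of an anti-automorphism.

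Next I would reduce the arrow-reversing property to a single numerical condition. Because $\sigma\tau=\tau^{-1}\sigma$, it is enough to test the two kinds of arrows of $\mathbb{Z}Q$ lying over $C^{0}$: an arrow $(0,\nu_i)\to(0,\nu_{i'})$ coming from $\alpha:\nu_i\to\nu_{i'}$ in $Q$, and an arrow $(0,\nu_{i'})\to(-1,\nu_i)$. Applying the vertex formula to the endpoints, one checks that $\sigma$ sends each of these to a \emph{reversed} arrow of $\mathbb{Z}Q$ precisely when
\[
j_i=j_{i'}+1\qquad\text{for every arrow }\nu_i\to\nu_{i'}\text{ of }Q .
\]
So the whole statement comes down to showing that the level function of the local slice $\widetilde{\Sigma}$ drops by exactly one along each arrow of $Q$.

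This last point is where the quiver-with-length hypothesis is essential, and it is the main obstacle. Since $Q$ is connected and a quiver with length, it carries a height function $\lambda:Q_0\to\mathbb{Z}$ with $\lambda(\nu_{i'})=\lambda(\nu_i)+1$ for every arrow $\nu_i\to\nu_{i'}$; the very existence (well-definedness) of $\lambda$ is the assertion that parallel paths have equal length. One checks that $\rho(m,\nu)=-2m+\lambda(\nu)$ is then an additive function on $\mathbb{Z}Q$ increasing by $1$ along each arrow, so that $\mathbb{Z}Q$ is itself a quiver with length. I would use this to argue that the level function $i\mapsto j_i$ of the slice $\widetilde{\Sigma}$ agrees, up to an additive constant, with $-\lambda$: reversing the arrow $\nu_i\to\nu_{i'}$ inside a section forces its two endpoints onto consecutive copies $C^{-j_{i'}}$ and $C^{-j_{i'}-1}$, i.e. $j_i=j_{i'}+1$, and the quiver-with-length condition is exactly what makes these local constraints globally consistent, so that the slice isomorphic to $Q^{op}$ reverses \emph{every} arrow of $Q$. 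I expect the identification of $j_i$ with $-\lambda$ (equivalently, the proof that all arrows are reversed) to be the delicate step; the Remark's example, in which $Q$ is not a quiver with length, shows that otherwise some arrows are reversed and others are not, and $S$ then fails to be an anti-morphism.

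Finally, granting $j_i=j_{i'}+1$ for every arrow, the reduction of the second paragraph shows that $\sigma$ reverses all arrows of $\mathbb{Z}Q$. Combined with $\sigma^2=\mathrm{id}$ and $\sigma\tau=\tau^{-1}\sigma$, this proves that $\sigma$, and hence the reflection $S$, induces an involutive anti-automorphism of the translation quiver $\mathbb{Z}Q$, which is the assertion of the proposition.
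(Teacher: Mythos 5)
Your first two steps are correct, and they are in fact a more careful version of exactly what the paper does: the vertex formula $S(\eta(-m,\nu_i))=\eta(m-j_i,\nu_i)$, the identities $\sigma^{2}=\mathrm{id}$ and $\sigma\tau=\tau^{-1}\sigma$, and the reduction of the whole proposition to the single condition $j_i=j_{i'}+1$ for every arrow $\nu_i\to\nu_{i'}$ of $Q$ (the paper phrases this same condition as ``each $\Sigma^{l}$ is discrete and the only arrows of $\Sigma$ go from a $\Sigma^{j}$ to $\Sigma^{j-1}$''). The genuine gap is in the step you yourself flag as delicate. It is \emph{not} true that a connected quiver with length carries a height function $\lambda$ with $\lambda(\nu_{i'})=\lambda(\nu_i)+1$ along every arrow: the paper's quiver-with-length condition constrains only parallel \emph{directed paths}, whereas the existence of $\lambda$ is equivalent to every closed \emph{walk} having signed length zero, which is strictly stronger. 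For example, the acyclically oriented pentagon
\[
1\longrightarrow 2\longleftarrow 3\longrightarrow 4\longleftarrow 5\longleftarrow 1
\]
has no pair of distinct parallel paths at all, hence is vacuously a quiver with length, yet the signed length around the cycle is $-1\neq 0$, so no height function exists and your identification of the level function $j$ with $-\lambda$ has nothing to be identified with.

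Worse, the gap cannot be bridged, because the condition you reduced to can genuinely fail under the hypotheses as stated. For the pentagon above, the involution exchanging $1\leftrightarrow 4$ and $2\leftrightarrow 3$ is an isomorphism $Q\to Q^{op}$, so $\mathbb{Z}Q$ does contain local slices isomorphic to $Q$ and to $Q^{op}$ (for instance $\widetilde{\Sigma}=\Sigma=C^{0}$). But if some local slice reversed every arrow orbit-wise, its level function $j$ would satisfy $j_{i'}=j_i-1$ along every arrow, making $-j$ a height function; since none exists, the condition $j_i=j_{i'}+1$ fails for \emph{every} admissible choice of $\widetilde{\Sigma}$, and the orbit-preserving map $S$ is never an anti-morphism. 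This also exposes a second conflation in your sketch: a slice abstractly isomorphic to $Q^{op}$ need not reverse every arrow --- whenever $Q\cong Q^{op}$ one may take $\widetilde{\Sigma}=\Sigma$, and then $S$ is the identity on $\mathcal{P}_{0}$. In fairness, the paper's own proof simply asserts the discreteness claim without justification, so your argument breaks exactly where the paper is silent; your outline does become a proof if ``quiver with length'' is strengthened to the walk version (equivalently, $Q$ has no subquiver of type $\tilde{A}_{pq}$ with $p\neq q$, the reading under which the appeal to Chaio--Platzeck--Trepode in Lemma 4.1 makes sense), \emph{and} $\widetilde{\Sigma}$ is taken to be the slice with level function $j=C-\lambda$ rather than an arbitrary slice isomorphic to $Q^{op}$.
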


\begin{proof}
 We show that the reflection $S:\eta(\Sigma)\longrightarrow \eta(\widetilde{\Sigma})$ constructed in Theorem 3.1 is an anti-isomorphism
of quivers.
 Consider the embedded quiver $\eta(\mathbb{Z}Q)$ defined in Theorem 3.1.
  Note that all sinks of $\Sigma$ belong to the intersection $\Sigma\cap \widetilde{\Sigma}$ and therefore are mapped to the plane $\mathcal{P}_0.$
 As before $\Sigma$ is decomposed into $k+1$ non-intersecting subquivers $\Sigma^{j}$.

   Because the
point $A_{ji}$ belongs to the plane $\mathcal{P}_j: y=j$, then $j=\rho(A_{ji},\mathcal{P}_{0})$ where $\rho(A_{ji},\mathcal{P}_{0})$ is the Euclidean
distance from the point $A_{jl}$ to the plane
 $\mathcal{P}_{0}.$ Note that by the definition of the reflection $S$, we have also $j=\rho(S(A_{ji}),\mathcal{P}_{0})$.
Because the quiver $Q$ is a quiver with length, each subquiver $\Sigma^{l}$ of $\Sigma$ is discrete, that is, it contains no arrows. The only arrows of
$\Sigma$ are from a subquiver $\Sigma^{j}$ to
a subquiver $\Sigma^{j-1}$. Thus in spatial representation of $\eta(\mathbb{Z}Q)$ all arrows are between different planes and therefore are reversed by
the reflection $S$, which shows that $S$ is an anti-morphism. To make this reasoning formal, let $\bar{\alpha}_{il}: A_{ji}\longrightarrow A_{j-1l}$ be
 an arrow of $\eta(\Sigma).$
Then the arrow $(0,\alpha_{il}): (0,\nu_{ji})\longrightarrow (0,\nu_{j-1l})$ is an arrow of $\Sigma$ and we have $\eta(0,\alpha_{il})=\bar{\alpha}_{il}.$
On the other hand,
$(-j+1,\alpha'_{il}): (-j+1,\nu_{j-1l})\longrightarrow (-j,\nu_{ji})$ is an arrow of $\widetilde{\Sigma}$. Since $
\eta(-j+1,\nu_{j-1l})=S(A_{j-1l})$ and
$\eta(-j,\nu_{ji})=S(A_{ji})$, then $\eta(-j+1,\alpha'_{il})=\bar{\alpha}'_{il}$ is an arrow from $S(A_{j-1l})$ to $S(A_{ji})$. Thus
$S(\bar{\alpha}_{il})=\bar{\alpha}'_{il}$.
Hence $S$ is an anti-isomorphism from the quiver $\Sigma$ to the quiver $\widetilde{\Sigma}.$ The symmetry $S$ is extended to an
anti-automorphism of the quiver $\mathbb{Z}Q$.
\end{proof}

\begin{expl}
Let $Q$ be the Dynkin quiver of type $\mathbb{A}_{3}$ given by $ 1\overset{\alpha_{_{12}}}\longrightarrow  2\overset{\alpha_{_{23}}}\longrightarrow 3$.
The quiver $\mathbb{Z}Q$ contains a local slice $\widetilde{\Sigma}$ isomorphic to $Q^{op}$ which intersects the local slice $\Sigma=(0,Q)$.
The quiver $\Sigma$ is given by $$ (0,1)\overset{(0,\alpha_{_{12}})}\longrightarrow  (0,2)\overset{(0,\alpha_{_{23}})}\longrightarrow (0,3)$$ and
the quiver $\widetilde{\Sigma}$ is given by $$ (0,3)\overset{(0,\alpha'_{_{23}})}\longrightarrow  (-1,2)\overset{(-1,\alpha'_{_{12}})}\longrightarrow (-2,1).$$
 We set $\eta(0,3)=(0,0,3)$, $\eta(0,2)=(0,1,1)$ and $\eta(0,1)=(0,2,0)$. We have $S(\eta(0,3))=(0,0,3)$, $S(\eta(0,2))=\eta(-1,2)=(-1,-1,1)$ and $S(\eta(0,1))=\eta(-1,1)=(-2,-2,0)$.\\
 We define the anti-morphism $\sigma$ on the points by $\sigma(0,0,3)=(0,0,3)$, $\sigma(0,1,1)=(-1,-1,1)$ and $\sigma(0,2,0)=(-2,-2,0)$; and define on the arrows by $\sigma(\eta(0,\alpha_{23}))=\eta(0,\alpha'_{23})$
and $\sigma(\eta(0,\alpha_{12}))=\eta(-1,\alpha'_{12})$. The map $\sigma$ is an anti-isomorphism of quivers, which is extended to an anti-automorphism of quiver $\eta(\mathbb{Z}Q).$ The anti-automorphism $\sigma$ is a reflection.
Therefore, the reflection $\sigma$ induces an involutive anti-automorphism of the quiver $\mathbb{Z}Q.$
\end{expl}

             We can now establish the existence of a particular automorphism of the translation quiver $\mathbb{Z}Q.$
\begin{lem}
Let $Q$ be a finite connected acyclic quiver such that the translation quiver $\mathbb{Z}Q$ contains a local slice whose quiver is
 isomorphic to $Q^{op}.$ Then there exists an anti-automorphism $\sigma$ of $\mathbb{Z}Q$ such that $\sigma^{2}=1.$
\end{lem}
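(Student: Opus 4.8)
The plan is to build $\sigma$ out of the symmetric embedding of Theorem 3.1 and then to reduce the requirement $\sigma^{2}=1$ to a single combinatorial condition on the $\tau$-orbits. Since $\mathbb{Z}Q$ is stable, I would first normalize exactly as in the proof of Theorem 3.1: take $\Sigma=C^{0}$, a local slice with quiver $Q$, and translate the given slice $\widetilde{\Sigma}\cong Q^{op}$ so that $\Sigma\cap\widetilde{\Sigma}\neq\emptyset$. This places us in the hypotheses of Theorem 3.1 and furnishes the embedding $\eta$ together with the oblique reflection $S$ satisfying $S(\eta(\Sigma))=\eta(\widetilde{\Sigma})$. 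Because $\widetilde{\Sigma}\cong Q^{op}\cong\Sigma^{op}$, there is an anti-isomorphism of quivers $g\colon\Sigma\longrightarrow\widetilde{\Sigma}$.

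Next I would extend $g$ to all of $\mathbb{Z}Q$. Here I use that $\mathbb{Z}Q$ is the mesh-completion of any of its local slices (sections, in the sense of [\ref{asisk}]), so that an anti-isomorphism between the quivers of two slices extends uniquely to an anti-automorphism $\sigma$ of the translation quiver $\mathbb{Z}Q$; by construction $\sigma$ reverses arrows and satisfies $\sigma\tau=\tau^{-1}\sigma$. Writing $\phi$ for the permutation of $Q_{0}$ that $\sigma$ induces on the $\tau$-orbits, one has $\sigma(m,\nu)=(H(\nu)-m,\phi(\nu))$ for suitable integers $H(\nu)$.

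The core of the proof is then $\sigma^{2}=1$, and I would reduce it to the single statement $\phi^{2}=\mathrm{id}$. First, $\sigma^{2}$ is an automorphism of $\mathbb{Z}Q$ commuting with $\tau$, and any automorphism fixing every $\tau$-orbit must be a power of $\tau$: if $F(m,\nu)=(m+e(\nu),\nu)$ preserves the direction of all arrows then $e$ is constant since $Q$ is connected. Second, if $\sigma^{2}=\tau^{c}$, then since $\sigma^{2}$ commutes with $\sigma$ the relation $\sigma\tau\sigma^{-1}=\tau^{-1}$ yields $\tau^{c}=\sigma\tau^{c}\sigma^{-1}=\tau^{-c}$, hence $c=0$. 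As the orbit permutation of $\sigma^{2}$ is exactly $\phi^{2}$, these two observations give $\sigma^{2}=1$ the moment $\phi^{2}=\mathrm{id}$.

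It remains to arrange $\phi^{2}=\mathrm{id}$, and this is where I expect the real difficulty, and where the geometry of Theorem 3.1 must do the work that the hypothesis ``quiver with length'' does in Proposition 4.1. The reflection $S$ fixes the plane $\mathcal{P}_{0}$ pointwise and interchanges $\eta(\Sigma)$ with $\eta(\widetilde{\Sigma})$, so the two slices sit as mirror images across $\mathcal{P}_{0}$; consequently the orbits meeting $\Sigma\cap\widetilde{\Sigma}$ are fixed and the remaining orbits should be matched in symmetric pairs at equal $\tau$-distance from the intersection. The task is to choose $g$ (exploiting the freedom to post-compose with $\mathrm{Aut}(Q)$) so that the induced $\phi$ realizes this matching as an involution, equivalently so that $\sigma$ genuinely interchanges the slices, $\sigma(\widetilde{\Sigma})=\Sigma$ with $\sigma|_{\widetilde{\Sigma}}=g^{-1}$; the reduction above then closes the argument. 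I expect the delicate point to be precisely the verification that such an order-two $\phi$ can always be selected, i.e.\ that the mirror symmetry of the two slices is realized by an involutive permutation of the orbits. This is exactly the phenomenon seen concretely in the $\mathbb{A}_{3}$ example following Proposition 4.1, while the obstruction flagged by the Remark and Example of Section 3 is nothing but the failure of the naive choice $\phi=\mathrm{id}$ read off directly from $S$.
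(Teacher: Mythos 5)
Your reduction is correct as far as it goes, and it is cleaner than anything in the paper: the unique extension of an anti-isomorphism $g\colon\Sigma\to\widetilde{\Sigma}$ to an anti-automorphism $\sigma$ of $\mathbb{Z}Q$ with $\sigma\tau=\tau^{-1}\sigma$, the observation that an automorphism preserving every $\tau$-orbit and every arrow direction is a power of $\tau$ (by connectedness of $Q$), and the computation $\tau^{c}=\sigma\tau^{c}\sigma^{-1}=\tau^{-c}$ forcing $c=0$, together show that $\sigma^{2}=1$ if and only if the induced permutation $\phi$ of the $\tau$-orbits satisfies $\phi^{2}=\mathrm{id}$. But you then stop exactly where the content of the lemma lies: you never prove that some choice of $g$ (equivalently, some element of the coset of orbit-permutations obtained by composing one such $g$ with $\mathrm{Aut}(Q)$) induces an involutive $\phi$. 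You explicitly defer this as ``the delicate point'' and ``the task,'' with no argument offered; since a coset of a subgroup of a symmetric group need not contain any involution, this cannot be dismissed on general grounds --- it is precisely where the geometry of the two slices has to enter. As it stands, your proposal is a correct \emph{reformulation} of the statement, not a proof of it.

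For comparison, the paper does not hunt for a good $g$ at all; it forces involutivity by construction. It chooses the anti-isomorphism $\sigma\colon\eta(\Sigma)\to\eta(\widetilde{\Sigma})$ so that every point of $\eta(\Sigma)\cap\eta(\widetilde{\Sigma})$ is fixed (this is where the normalization $\Sigma\cap\widetilde{\Sigma}\neq\emptyset$, with sinks of $\Sigma$ becoming sources of $\widetilde{\Sigma}$, is used), defines $\overline{\sigma}$ to be $\sigma$ on $\eta(\Sigma)$ and $\sigma^{-1}$ on $\eta(\widetilde{\Sigma})$ --- consistent on the overlap because it is fixed pointwise --- and then extends piecewise to $\eta(\mathbb{Z}Q)$, using the induced permutations $\sigma_{j}$ on the copies $C^{m}$ with $m\geq 0$ and $\sigma_{j}^{-1}$ on those with $m<0$, so that $\tilde{\sigma}^{2}=1$ becomes a two-line check. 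Note that your own uniqueness observation cuts against this gluing: an anti-automorphism compatible with $\tau$ is already determined by its values on the single slice $\Sigma$, so prescribing $\sigma^{-1}$ on $\widetilde{\Sigma}$ as well is a genuine constraint, equivalent to $\phi^{2}=\mathrm{id}$ for the chosen $\sigma$ --- in other words, the paper's piecewise definition is consistent exactly under the condition you isolated but did not verify, and the paper's intersection-fixing requirement is its (largely implicit) substitute for that verification. If you want to complete your route, the missing ingredient is a proof that an anti-isomorphism fixing $\Sigma_{0}\cap\widetilde{\Sigma}_{0}$ pointwise can be chosen with involutive orbit permutation; without that, the argument does not close.
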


\begin{proof}
Let $Q$ be a finite connected acyclic quiver.

      If $Q$ is a quiver with length, the reflection $S$ induces an involutive anti-automorphism due to Proposition 4.1.

      Suppose that $Q$ is not a quiver with length. According to [\ref{chpltr}] the quiver $Q$ contains a subquiver of type $\tilde{A}_{pq}$, with $p\neq q.$ Let $\Sigma$ and $\widetilde{\Sigma}$ be two local slices of $\mathbb{Z}Q$ whose quivers are respectively isomorphic to $Q$ and $Q^{op}.$ Without
loss of generality, we can choose $\Sigma$
 and $\widetilde{\Sigma}$ such that $\Sigma_0\cap
\widetilde{\Sigma}_0\neq \emptyset$. Consider the spatial representation $\eta(\mathbb{Z}Q)$ of
$\mathbb{Z}Q$ constructed in the proof of Theorem 3.1. In the case without length, there exist planes $\mathcal{P}_j$ containing non-discrete quivers. It follows from Theorem 3.1 that
$\eta(\Sigma)$ and $\eta(\widetilde{\Sigma})$ are symmetric to each other with
respect to
the plane $\mathcal{P}_0$ of $\mathbb{R}^{3}$ via the reflection $S$.
    Let $\widetilde{Q}$ be the quiver defined by $\widetilde{\Sigma}=(0,\widetilde{Q})$ and denote by  $\tilde{C}^{m}$ the subquiver of $\mathbb{Z}\widetilde{Q}$ of the form $(m,\widetilde{Q}).$
Since $\widetilde{\Sigma}$ is a local slice of $\mathbb{Z}Q$, then $\tilde{C}^{m}$ is a local slice of $\mathbb{Z}Q$ for all integer $m.$

      By the choice of $\Sigma$ and $\widetilde{\Sigma}$, each sink of $\eta(\Sigma)$ is a source of $\eta(\widetilde{\Sigma}).$ Because  $\eta(\Sigma)$ is isomorphic to $Q$ and $\eta(\widetilde{\Sigma})$
is isomorphic to $Q^{op}$, then there exists an anti-isomorphism $\sigma:\eta(\Sigma)\longrightarrow \eta(\widetilde{\Sigma})$ such that every point of $\eta(\Sigma)\cap \eta(\widetilde{\Sigma})$ is invariant under
$\sigma.$ The anti-isomorphism $\sigma$ induces a permutation $\sigma_{j}$ in $S_{s_{j}},$ where $s_{j}$ is defined as in the proof of the Theorem 3.1. The permutation $\sigma_{j}$ is viewed as the restriction of
$S\sigma$ in $\mathcal{P}_{j}\cap \eta(\Sigma_0).$

       We define the map $\overline{\sigma}:\eta(\Sigma)\cup \eta(\widetilde{\Sigma})\longrightarrow \eta(\Sigma)\cup \eta(\widetilde{\Sigma})$ such that the restriction of $\overline{\sigma}$ on $\eta(\Sigma)$ is $\sigma$ and the restriction of $\overline{\sigma}$ on $\eta(\widetilde{\Sigma})$ is $\sigma^{-1}.$ By the definition, the map $\overline{\sigma}$ is an anti-automorphism of the quiver $\eta(\Sigma)\cup \eta(\widetilde{\Sigma})$
which has the following properties:\\
$(p_1)$ $\overline{\sigma}$ is an involution\\
$(p_2)$ $\overline{\sigma}$ maps $\mathcal{P}_{j}$ on $\mathcal{P}_{-j}$, with $0\leq j\leq k$.

     We recall that $t_{\vec{u}}$ is the translation in $\mathbb{R}^{3}$ by vector $\vec{u}=(-1,-2,0)$ defined on the step $(b)$ of the proof of the Theorem 3.1.
     We extend $\overline{\sigma}$ to $\eta(\mathbb{Z}Q)$ by the map $\tilde{\sigma}:\eta(\mathbb{Z}Q)\longrightarrow \eta(\mathbb{Z}Q)$ defined as follows:
$\tilde{\sigma}(\eta(C^{m}))=t_{{m}\vec{u}}\overline{\sigma}t_{{m}\vec{u}}(\eta(C^{m}))$ for all $m\in \mathbb{Z};$ because $\mathbb{Z}Q\cong \mathbb{Z}\widetilde{Q}$ it is equivalent to define
$\tilde{\sigma}$ by $\tilde{\sigma}(\eta(\tilde{C}^{m}))=t_{{m}\vec{u}}\overline{\sigma}t_{{m}\vec{u}}(\eta(\tilde{C}^{m}))$ for all $m\in \mathbb{Z}$. We have $t_{{m}\vec{u}}(\eta(C^{m}))=\eta(\Sigma)$ and $t_{{-m}\vec{u}}(\eta(C^{-m}))=\eta(\widetilde{\Sigma})$ for $m$ non-negative integer. We set $A_{mi}:=t_{{m}\vec{u}}(A_{ji})$, where $A_{ji}$ is defined as in the proof of the Theorem 3.1 and $\sigma_{j}\in S_{s_{j}}$ is the permutation induced by $\sigma,$ with $0\leq j\leq k.$
        To make the definition of $\tilde{\sigma}$ more specific, let $ A_{mi}$ be
a point of $\eta(C^{m})$ and  $A'_{mi}$ the image of $A_{mi}$ under the reflection $S,$ we have $\tilde{\sigma}(A_{mi})=A'_{m\sigma_{j}(i)}=S(A_{m\sigma_{j}(i)})$ if $m$ is non-negative integer, and $\tilde{\sigma}(A_{mi})=A'_{m\sigma^{-1}_{j}(i)}=S(A_{m\sigma^{-1}_{j}(i)})$ otherwise, with $0\leq j\leq k.$

           It is enough to define $\tilde{\sigma}$ on the points of $\eta(\mathbb{Z}Q)$, because the definition of $\tilde{\sigma}$  on the arrows is induced by the one on the points.
     The map $\tilde{\sigma}$ is uniquely determined by its value on the points of $\eta(\mathbb{Z}Q)$ and thus extends in the unique way to an anti-automorphism of the quiver $\eta(\mathbb{Z}Q)$.
Hence the extension $\tilde{\sigma}$ of $\sigma$ is an anti-automorphism of the quiver $\eta(\mathbb{Z}Q)$ which maps $C^{m}$ and $\tilde{C}^{-m}$ to each other.

    It remains to show that $\tilde{\sigma}$ is an involution. By the definition, it is sufficient to show this just for the points of $\eta(\mathbb{Z}Q)$.
Let $A_{mi}$ be a point of $\eta(\mathbb{Z}Q)$, if $m$ is a non-negative integer
then $$\begin{array}{lllll}\tilde{\sigma}^{2}(A_{mi})&=&\tilde{\sigma}(A'_{m\sigma_{j}(i)})\\ &=&A_{m\sigma^{-1}_{j}(\sigma_{j}(i))}\\
&=&A_{mi}; \end{array}$$
if $m$ is a negative integer then $$\begin{array}{lllll}\tilde{\sigma}^{2}(A_{mi})&=&\tilde{\sigma}(A'_{m\sigma^{-1}_{j}(i)})\\ &=&A_{m\sigma_{j}(\sigma^{-1}_{j}(i))}\\
&=&A_{mi}. \end{array}$$
The map $\tilde{\sigma}$ is an involutive anti-automorphism of $\eta(\mathbb{Z}Q).$ Thus $\tilde{\sigma}$ induces an involutive anti-automorphism of $\mathbb{Z}Q $.

\end{proof}

In order to illustrate the notations used in the proof of Lemma 4.1 we present an example.

\begin{expl}
We consider the quiver $Q$ of the Example 3.1 defined by:

$$
\xymatrix{1\ar[rr]^{\alpha_{13}}\ar[dr]_{\alpha_{12}}&&3 \\
&2\ar[ur]_{\alpha_{23}}&}.
$$
We know that $\eta(0,3)=(0,0,2)$, $\eta(0,2)=(0,1,1)$ and $\eta(0,1)=(0,1,0)$. We have $S(\eta(0,3))=(0,0,2)$, $S(\eta(0,2))=\eta(-1,2)$ and $S(\eta(0,1))=\eta(-1,1)$.
We define $\sigma$ by: the arrow  $\eta(0,\alpha_{12}):\eta(0,1)\rightarrow \eta(0,2)$ is mapped to the arrow $\eta(-1,\alpha_{12}):\eta(-1,1)\rightarrow \eta(-1,2),$ that is  $\sigma(\eta(0,1))=\eta(-1,2)$ and $\sigma(\eta(0,2))=\eta(-1,1)$; we have also
$\sigma(\eta(0,3))=\eta(0,3)$ and $\sigma(\eta(0,\alpha_{13}))=\eta(0,\alpha'_{23})$ and $\sigma(\eta(0,\alpha_{23}))=\eta(0,\alpha'_{13}).$ The map $\sigma$ is extended to an involutive anti-automorphism of $\eta(\mathbb{Z}Q).$ Hence $\sigma$ induces an involutive anti-automorphism in the transjective component  $\Gamma_{tr}\cong \mathbb{Z}Q$ of the cluster category $\mathcal{C}_{Q}$.\\
To simplify the notations, let denote by $\sigma$ the anti-automorphism of $\mathbb{Z}Q$ defined above. Let $X=\{x_1,x_2,x_3\}$ be the set of variables and $\mathcal{A}$ the cluster algebra with initial seed
$(X,Q)$. We define the map $f$ from $\mathcal{A}$ to $\mathcal{A}$ by: $f(x_1)=\frac{x_1+x_{3}+x_{2}x^{2}_{3}}{x_{1}x_{2}}$, $f(x_2)=\frac{1+x_{2}x_{3}}{x_{1}}$ and $f(x_3)=x_{3}.$ The above data defined an inverse cluster automorphism of $\mathcal{A}.$ By observing that $f(x_1)=\frac{1+f(x_{2})x_{3}}{x_{2}}$, we have $f^{2}(x_2)=\frac{1+f(x_{2})x_{3}}{f(x_{1})}=x_{1}$ and
$f^{2}(x_1)=\frac{1+f^{2}(x_{2})x_{3}}{f(x_{2})}=x_2;$ thus $f$ is an involution.\\
One can show by using the categorification of the cluster algebra $\mathcal{A}$ as in [\ref{bmrrt}] that the involutive cluster automorphism $f$ is induced by the involutive anti-automorphism $\sigma$.

\end{expl}

It has been proved in [\ref{ashifsh}, Theorem 3.11] that, if $\mathcal{A}$ is a cluster algebra with a seed
$(X,Q)$ such that $Q$ is a tree then the group of cluster automorphisms is the semidirect product Aut$\mathcal{A}$=Aut$^{+}\mathcal{A}\rtimes \mathbb{Z}_2$.
Moreover this product is not direct.
The following result shows that this holds for any finite connected acyclic quiver which is mutation equivalent
to its opposite quiver.

\begin{cor}
Let $Q$ be a finite connected acyclic quiver such that $Q$ and $Q^{op}$ are mutation equivalent. If $\mathcal{A}$ is a cluster algebra with a seed
$(X,Q)$ then the group of cluster automorphisms is the semidirect product Aut$\mathcal{A}$=Aut$^{+}\mathcal{A}\rtimes \mathbb{Z}_2$. This product is usually not direct.

\end{cor}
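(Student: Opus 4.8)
The plan is to promote the involutive anti-automorphism $\sigma$ of $\mathbb{Z}Q$ furnished by Lemma 4.1 to an involutive \emph{inverse} cluster automorphism $f$ of $\mathcal{A}$, and then to read the group structure off the exact sequence $1\to\mathrm{Aut}^{+}\mathcal{A}\to\mathrm{Aut}\,\mathcal{A}\to\mathbb{Z}_{2}$ provided by the fact, recalled in Section 2, that $\mathrm{Aut}^{+}\mathcal{A}$ is a normal subgroup of index at most two.

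First I would check that the hypothesis of Lemma 4.1 is met. As $Q$ and $Q^{op}$ are mutation equivalent, some sequence of mutations carries $(X,Q)$ to a seed $(X',Q')$ with $Q'\cong Q^{op}$, both quivers being acyclic. Passing to the cluster category $\mathcal{C}_{Q}$, where clusters correspond to cluster-tilting objects and the transjective component $\Gamma_{tr}$ is identified with $\mathbb{Z}Q$ (as in Section 3 and Example 4.2), such a reachable acyclic seed determines a section, i.e. a local slice, of $\mathbb{Z}Q$ whose quiver is isomorphic to $Q^{op}$. Thus $\mathbb{Z}Q$ contains a local slice isomorphic to $Q^{op}$, and Lemma 4.1 yields an anti-automorphism $\sigma$ of $\mathbb{Z}Q$ with $\sigma^{2}=1$.

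I would then transport $\sigma$ to the algebra by means of the cluster character $X_{?}$. Its action on the initial local slice $\Sigma=(0,Q)$ relabels the initial cluster $X$, and I define $f$ as the $\mathbb{Z}$-algebra endomorphism of $\mathcal{F}$ determined by this relabeling, verifying (CA1) and (CA2) so that $f\in\mathrm{Aut}\,\mathcal{A}$. Since $\sigma$ carries $\Sigma\cong Q$ to $\widetilde{\Sigma}\cong Q^{op}$, the induced quiver isomorphism has the form $\varphi:Q^{op}\to Q(f(X))$ with $\varphi(p_{x})=p_{f(x)}$, so by Lemma 2.1(b) the automorphism $f$ is of \emph{inverse} type. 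Moreover $f^{2}=\mathrm{id}$: on the initial cluster this is exactly $\sigma^{2}=1$, and compatibility with mutation propagates it to every cluster variable, so $f$ has order two. Example 4.2 exhibits precisely such an $f$.

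Finally I would assemble the group. The existence of the inverse automorphism $f$ forces the index of $\mathrm{Aut}^{+}\mathcal{A}$ to be exactly two, and since $f^{2}=1$ with $f\notin\mathrm{Aut}^{+}\mathcal{A}$, the subgroup $\langle f\rangle\cong\mathbb{Z}_{2}$ satisfies $\mathrm{Aut}^{+}\mathcal{A}\cap\langle f\rangle=\{1\}$ and $\mathrm{Aut}^{+}\mathcal{A}\cdot\langle f\rangle=\mathrm{Aut}\,\mathcal{A}$; hence $\mathrm{Aut}\,\mathcal{A}=\mathrm{Aut}^{+}\mathcal{A}\rtimes\mathbb{Z}_{2}$. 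The product is usually not direct because conjugation by $f$ interchanges the roles of $i^{+}$ and $i^{-}$ and so acts nontrivially on $\mathrm{Aut}^{+}\mathcal{A}$ in general, as already seen for trees in [\ref{ashifsh}, Theorem 3.11]. I expect the main obstacle to lie in the passage from combinatorics to algebra: one must ensure that the combinatorial involution $\sigma$ descends through the cluster character to a genuinely well-defined algebra automorphism that remains an involution, and in the non-Dynkin case one must also control the regular components of $\mathcal{C}_{Q}$ so that $f$ is defined on all cluster variables, not merely the transjective ones. The order-two property is the decisive point, since an arbitrary inverse cluster automorphism need not be an involution; this is exactly why the \emph{involutive} $\sigma$ of Lemma 4.1 is required.
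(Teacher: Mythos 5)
Your overall route is the same as the paper's: use mutation equivalence to produce a local slice of type $Q^{op}$ in the transjective component, invoke Lemma 4.1 to get the involutive anti-automorphism $\sigma$, transfer $\sigma$ to an involutive inverse cluster automorphism $f$, and finish with group theory. Your hand-made group-theoretic conclusion (an inverse automorphism forces the index of $\mathrm{Aut}^{+}\mathcal{A}$ to be exactly two --- this uses that $Q$, having no $2$-cycles, admits no automorphism that is simultaneously direct and inverse --- and the involution then generates a complement) is correct; the paper simply cites [\ref{ashifsh}, Corollary 2.12] for this step, and both you and the paper handle the claim that the product is usually not direct by referring to the tree case of [\ref{ashifsh}, Theorem 3.11].

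Two steps, however, are genuine gaps as written. (i) The identification $\Gamma_{tr}\cong\mathbb{Z}Q$, on which your verification of the hypothesis of Lemma 4.1 rests, fails when $Q$ is Dynkin: there the Auslander--Reiten quiver of $\mathcal{C}_Q$ is the finite quotient of $\mathbb{Z}Q$ by the automorphism induced by $\tau^{-1}[1]$, so a reachable acyclic seed gives a local slice of that quotient rather than of $\mathbb{Z}Q$, and transferring $\sigma$ back to the cluster algebra would additionally require $\sigma$ to descend to the quotient. The paper avoids this by explicitly assuming $Q$ non-Dynkin and deferring the Dynkin case to [\ref{ashifsh}]; since Dynkin quivers are trees, you could also dispose of them via [\ref{ashifsh}, Theorem 3.11], but your proof must make this case distinction. (ii) The transfer itself, which you rightly single out as the main obstacle, is not established by your propagation remark: defining $f$ on the transcendence basis $X$ does determine it on all of $\mathcal{F}$, but to compute $f^{2}(x_i)$ you need to know that $f(X_M)=X_{\sigma(M)}$ holds for the indecomposables $M$ lying in $\widetilde{\Sigma}$, i.e.\ that $f$ is compatible with the cluster character on the whole transjective component and not only on $\Sigma$. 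This compatibility is precisely the content of [\ref{ashifsh}, Lemma 3.10], which the paper cites at exactly this point; without it (or a proof of it), both the verification of (CA1)/(CA2) for your $f$ and the equality $f^{2}=\mathrm{id}$ remain unproven. Your worry about the regular components, on the other hand, is not an issue: $f$ is automatically defined on every cluster variable because it is defined on all of $\mathcal{F}$ by its values on $X$.
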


\begin{proof}
Let $Q$ be a finite connected acyclic quiver and  $\mathcal{A}$ a cluster algebra with a seed $(X,Q).$ We assume without loss of generality that $(X,Q)$ is the initial
seed of $\mathcal{A}$, where the quivers
$Q$ and $Q^{op}$ are mutation equivalent. We may assume that $Q$ is not Dynkin, for Dynkin type case we refer
to [\ref{ashifsh}, Lemma 3.10]. The transjective component of the cluster category is of the form $\Gamma_{tr}\cong \mathbb{Z}Q$ and
$\Gamma_{tr}\cong \mathbb{Z}Q^{op}.$ Then, there exist two local slices $\Sigma$ and $\widetilde{\Sigma}$ of $\Gamma_{tr}$ whose associated
 quivers are respectively isomorphic to  $Q$ and $Q^{op}$. We may choose $\Sigma$ and $\widetilde{\Sigma}$ such that $\Sigma_0\cap
\widetilde{\Sigma}_0\neq \emptyset$. By [\ref{ashifsh}, Lemma 3.10] the anti-automorphism $\sigma$ of the Lemma 4.1
induces an inverse cluster automorphism $f_{\sigma}$ of order two. Therefore [\ref{ashifsh}, Corollary 2.12] implies that
Aut$\mathcal{A}$=Aut$^{+}\mathcal{A}\rtimes \mathbb{Z}_2$. It was proved in [\ref{ashifsh}, Theorem 3.11] that for the particular case where
 $Q$ is a tree, this product is not direct.

\end{proof}

The existence of an involutive anti-automorphism of $\mathbb{Z}\Sigma$ give rise to a characterization of finite connected acyclic quiver
$\Sigma$ such that the translation quiver $\mathbb{Z}\Sigma$ contains a local slice whose quiver is
 isomorphic to $\Sigma^{op}.$

   Let $Q$ be a finite connected quiver and $R$ a
subquiver of $Q.$ We call the $closure$ of $R$ and we denote by $\overline{R}$ the smallest full subquiver of $Q$ containing $R.$
A $symmetric$ $quiver$ $(Q,\sigma)$ is a pair consisting of a quiver $Q=(Q_0,Q_1,s,t)$ and an involution
$\sigma:Q_{0}\amalg Q_{1}\longrightarrow
Q_{0}\amalg Q_{1}$ with $\sigma(Q_0)=Q_0$ and $\sigma(Q_1)=Q_1$ such that:\\
$(a)$ $s(\sigma(\alpha))=\sigma(t(\alpha))$ and $t(\sigma(\alpha))=\sigma(s(\alpha))$ for all $\alpha\in Q_1$\\
$(b)$ If $\sigma(t(\alpha))=s(\alpha)$ then $\sigma(\alpha)=\alpha$.

      In other words $Q$ is a symmetric quiver if it has an involutive anti-automorphism. For more details about symmetric quivers, we refer to [\ref{deway}].
\begin{definition}
Let $Q$ be a connected quiver and $(Q^{i})_{1\leq i\leq m}$ a family of full subquivers of $Q$. We say that the family $(Q^{i})_{1\leq i\leq m}$
is a complete decomposition
of $Q$ if the
following conditions are satisfied:\\
(C1) $Q_0^{i}\cap Q_0^{j}=\phi$ if $i\neq j$\\
(C2) $Q=\overline{Q^{1}\cup Q^{2}...Q^{n}}.$\\
(C3) The only arrows between two connected components are from $Q^{i}$ to $Q^{i-1}$ for $1\leq i\leq n-1$\\
\end{definition}

\begin{cor}
Let $Q$ be a finite connected acyclic quiver such that the translation quiver $\mathbb{Z}Q$ contains a local slice whose quiver is
 isomorphic to $Q^{op}.$ Then $Q$ possesses a complete decomposition into a family of symmetric subquivers.
\end{cor}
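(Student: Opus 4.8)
The plan is to combine Lemma 4.1 with the level filtration of $\Sigma$ coming from the embedding of Theorem 3.1. First I would apply Lemma 4.1 to produce an involutive anti-automorphism $\phi:=\tilde{\sigma}$ of $\mathbb{Z}Q$, recording the two properties I shall use: $\phi^{2}=1$, and $\phi$ \emph{anti-commutes} with the translation, i.e. $\phi\tau=\tau^{-1}\phi$. The latter is read off from Lemma 4.1, where $\phi$ sends $C^{m}$ to $\tilde{C}^{-m}$: indeed $\phi\tau(C^{m})=\phi(C^{m-1})=\tilde{C}^{-m+1}=\tau^{-1}\phi(C^{m})$. As in Theorem 3.1 I take $\Sigma=C^{0}\cong Q$ and a slice $\widetilde{\Sigma}\cong Q^{op}$ with $\Sigma_{0}\cap\widetilde{\Sigma}_{0}\neq\emptyset$, together with the level function $\ell:Q_{0}\to\{0,\dots,k\}$ defined by $\ell(\nu)=j$ if and only if $(-j,\nu)\in\widetilde{\Sigma}$; this is well defined since $\widetilde{\Sigma}$ meets each $\tau$-orbit exactly once. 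Writing $Q^{j}=\Sigma^{j}$ for the full subquiver of $\Sigma$ on the level-$j$ vertices gives the candidate decomposition $(Q^{j})_{0\leq j\leq k}$.

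Next I would verify that $(Q^{j})$ is a complete decomposition. Condition (C1) is immediate, as the levels partition $\Sigma_{0}$, and (C2) holds because the $\Sigma^{j}$ are full subquivers covering every vertex of the connected quiver $\Sigma\cong Q$, so their closure is $Q$. For (C3) I would quote the embedding of Theorem 3.1 directly: $\eta(\Sigma^{j})$ lies in the plane $\mathcal{P}_{j}$, and by construction each arrow of $\eta(\Sigma)$ is either internal to some $\mathcal{P}_{j}$ (an arrow of $\Sigma^{j}$) or joins $\mathcal{P}_{j}$ to $\mathcal{P}_{j-1}$ through the translation $t_{\vec{v}}$ with $\vec{v}=(0,1,-1)$; no arrow skips a plane. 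Hence every arrow between distinct pieces runs from $\Sigma^{j}$ to $\Sigma^{j-1}$, which is (C3) after reindexing the levels $0,\dots,k$ as $1,\dots,k+1$.

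It remains to make each $\Sigma^{j}$ a symmetric quiver. By property $(p_{2})$ of Lemma 4.1, $\phi$ sends the level-$j$ part $\Sigma^{j}$ of $C^{0}$ onto the level-$(-j)$ part $\{(-j,\mu):\ell(\mu)=j\}$ of $\widetilde{\Sigma}$, and $\tau^{-j}$ sends this set back onto $\Sigma^{j}_{0}$; since both subquivers are full in $\mathbb{Z}Q$ and $\tau^{-j}$ is an automorphism, I may set $\theta_{j}:=\tau^{-j}\circ\phi|_{\Sigma^{j}}:\Sigma^{j}\to\Sigma^{j}$. As the composite of the anti-isomorphism $\phi$ with the automorphism $\tau^{-j}$, the map $\theta_{j}$ is an anti-automorphism of the full subquiver $\Sigma^{j}$, and it is involutive because $\theta_{j}^{2}=\tau^{-j}\phi\tau^{-j}\phi=\tau^{-j}(\tau^{j}\phi)\phi=\phi^{2}=1$, the middle step being the anti-commutation $\phi\tau^{-j}=\tau^{j}\phi$. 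Axiom (b) of a symmetric quiver then holds: if $\theta_{j}$ interchanges the source and target of an arrow $\alpha$, the reversed arrow $\theta_{j}(\alpha)$ has the same source and target as $\alpha$, and since $\theta_{j}$ is determined on arrows by its effect on vertices one gets $\theta_{j}(\alpha)=\alpha$ (the arrow $1\to 2$ in Example 3.1 illustrates this). Thus each $(\Sigma^{j},\theta_{j})$ is symmetric, and $Q=\overline{\Sigma^{0}\cup\dots\cup\Sigma^{k}}$ is the required complete decomposition into symmetric subquivers.

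The step I expect to be most delicate is (C3): one must know that no arrow of $Q$ joins two non-consecutive levels. I would secure this from the slice structure of $\widetilde{\Sigma}$—its coordinate function $-\ell$ changes by at most one across each edge, and the identification $\widetilde{\Sigma}\cong Q^{op}$ fixes the direction—so that it is genuinely the hypothesis on $\widetilde{\Sigma}$, and not merely the existence of $\phi$, that aligns the filtration with the arrows. By contrast the involutivity of each $\theta_{j}$ is a purely formal consequence of $\phi^{2}=1$ and $\phi\tau=\tau^{-1}\phi$.
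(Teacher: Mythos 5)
Your proof is correct and takes essentially the same route as the paper's: the identical level decomposition $(\Sigma^{j})_{0\leq j\leq k}$ induced by the planes $\mathcal{P}_{j}$ of Theorem 3.1, with conditions (C1)--(C3) coming from the slice structure and the symmetric structure on each piece supplied by the involutive anti-automorphism of Lemma 4.1. Your map $\theta_{j}=\tau^{-j}\circ\phi|_{\Sigma^{j}}$ is just a more carefully stated version of what the paper calls the restriction of $\sigma$ to $\eta(\Sigma^{l})$ (taken literally, that restriction lands in $\eta(\widetilde{\Sigma}^{l})$ rather than in $\eta(\Sigma^{l})$, and the paper's identification of the two sets via $S(\eta(\Sigma^{l}))=\eta(\widetilde{\Sigma}^{l})$ is exactly your composition with $\tau^{-j}$), so the two arguments coincide.
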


  \begin{proof}
Let $Q$ be a finite connected acyclic quiver, and $\Sigma$ be a local slice in $\mathbb{Z}Q$ whose quiver is isomorphic to $Q$ and $\widetilde{\Sigma}$
be a local slice in $\mathbb{Z}Q$ whose quiver is isomorphic to $Q^{op}$.
   We consider the spatial representation of $\mathbb{Z}Q$.
Because of Theorem 3.1 and the
definition of the spatial representation of $\mathbb{Z}Q$, we assume that $\eta(\Sigma)$ intersects $k+1$
planes $\mathcal{P}_{m}:y=m,$ where $0\leq m\leq k.$
We have $\eta(\Sigma^{l})=\eta(\Sigma)\cap \mathcal{P}_{l}$, where $\Sigma^{l}$ is the full subquiver of $\eta(\Sigma)$ represented in
 the plane
$\mathcal{P}_{l}:y=l,$ via the embedding $\eta$ of Theorem 3.1. By construction, the family $(\Sigma^{l})_{0\leq l\leq k}$ is a complete decomposition
of $\Sigma.$ It remains to prove that $\Sigma^{l}$ is a symmetric quiver for
$0\leq l\leq k.$
We have $\eta(\widetilde{\Sigma}^{l})=\eta(\widetilde{\Sigma})\cap \mathcal{P}_{-l}$, where $\eta(\widetilde{\Sigma}^{l})$ is the full subquiver of $\eta(\widetilde{\Sigma})$ represented in
 the plane
$\mathcal{P}_{l}:y=-l,$ via the embedding $\eta$ of Theorem 3.1. So the family $(\widetilde{\Sigma}^{l})_{0\leq l\leq k}$ is a complete decomposition
of $\widetilde{\Sigma}.$

        Since $\widetilde{\Sigma}$ is a local slice of $\mathbb{Z}Q$, then by the
Lemma 4.1, there exists an involutive
anti-isomorphism $\sigma:\mathbb{Z}Q\longrightarrow \mathbb{Z}Q$ such that $\sigma(\Sigma)=\widetilde{\Sigma}.$

Because $S(\eta(\Sigma^{l}))$ and $\eta(\widetilde{\Sigma}^{l})$ are two full subquivers of $\eta(\widetilde{\Sigma})$ located in the same plane with $|S(\eta(\Sigma^{l}))|=|\eta(\widetilde{\Sigma}^{l})|,$
we deduce that the coincide: $S(\eta(\Sigma^{l}))=\eta(\widetilde{\Sigma}^{l}).$ Since $S(\eta(\Sigma^{l}))\cong \eta(\Sigma^{l})$ and $\eta(\widetilde{\Sigma}^{l})\cong (\eta(\Sigma^{l}))^{op}$, we get that
$\eta(\Sigma^{l})\cong (\eta(\Sigma^{l}))^{op}.$ Thus the restriction of $\sigma$ on $\eta(\Sigma^{l})$ is an involutive anti-automorphism.
 Hence $\Sigma^{l}$ is a symmetric quiver for all $l$, with
$0\leq l\leq k.$ This completes the proof of our assertion.
\end{proof}

\begin{cor}
Let $Q$ be a finite connected acyclic quiver and $K$ an algebraically closed field. Then
$Q$ and $Q^{op}$ are mutation equivalent if and only if $KQ$ and $KQ^{op}$ are derived equivalent.

\end{cor}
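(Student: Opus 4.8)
The plan is to show that each of the two conditions is equivalent to the purely combinatorial statement that the translation quivers $\mathbb{Z}Q$ and $\mathbb{Z}Q^{op}$ are isomorphic, and then to chain the equivalences, working throughout with the transjective component $\Gamma_{tr}\cong \mathbb{Z}Q$ of $\mathcal{C}_{Q}$ exactly as in the proof of Corollary 4.1. First I would dispose of the Dynkin case: if $Q$ is Dynkin then its underlying graph is a tree, so $Q$ and $Q^{op}$ are related by a finite sequence of reflections at sinks and sources, hence are both mutation equivalent (reflections are mutations) and derived equivalent (reflection functors are derived equivalences, and Happel classifies derived equivalences of Dynkin type by the underlying diagram). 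Thus both conditions hold unconditionally and the equivalence is vacuous. So I may assume $Q$ is not Dynkin, and in particular that $KQ$ is representation-infinite.

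Next I would establish $Q\sim_{\mathrm{mut}}Q^{op}\iff \mathbb{Z}Q\cong\mathbb{Z}Q^{op}$. For the forward implication, mutation equivalence of $Q$ with the acyclic quiver $Q^{op}$ produces a cluster-tilting object $T$ of $\mathcal{C}_{Q}$ whose endomorphism quiver is $Q^{op}$; since $Q^{op}$ is acyclic the associated potential vanishes and $\operatorname{End}_{\mathcal{C}_{Q}}(T)\cong KQ^{op}$ is hereditary, so the indecomposable summands of $T$ lie in $\Gamma_{tr}$ and form a local slice $\widetilde{\Sigma}$ with $\widetilde{\Sigma}\cong Q^{op}$. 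Because a local slice is a section, passing to the repetitive quiver recovers the whole component, giving $\mathbb{Z}Q^{op}=\mathbb{Z}\widetilde{\Sigma}\cong\mathbb{Z}Q$. Conversely, an isomorphism $\mathbb{Z}Q\cong\mathbb{Z}Q^{op}$ exhibits $Q^{op}$ as a section, hence a local slice, of $\mathbb{Z}Q$; the sum of its vertices is then a cluster-tilting object of $\mathcal{C}_{Q}$ of quiver $Q^{op}$. Since every cluster-tilting object is linked to the canonical one (of quiver $Q$) by a finite sequence of mutations, and mutation of cluster-tilting objects realizes quiver mutation, this forces $Q^{op}$ into the mutation class of $Q$.

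Then I would prove $\mathbb{Z}Q\cong\mathbb{Z}Q^{op}\iff KQ\sim_{\mathrm{der}}KQ^{op}$, which is Happel's description of the derived category of a hereditary algebra: for connected non-Dynkin acyclic quivers, $KQ$ and $KQ^{op}$ are derived equivalent precisely when the transjective components $\Gamma(\mathcal{D}^{d}(\operatorname{mod}KQ))\cong\mathbb{Z}Q$ and $\Gamma(\mathcal{D}^{d}(\operatorname{mod}KQ^{op}))\cong\mathbb{Z}Q^{op}$ are isomorphic as translation quivers. For the implication from derived equivalence one may instead argue directly: a triangle equivalence $\mathcal{D}^{d}(\operatorname{mod}KQ^{op})\to\mathcal{D}^{d}(\operatorname{mod}KQ)$ commutes with the shift and with the Serre functor, hence with $\tau^{-1}[1]$, and therefore descends to a triangle equivalence $\mathcal{C}_{Q^{op}}\to\mathcal{C}_{Q}$ carrying the canonical cluster-tilting object of $\mathcal{C}_{Q^{op}}$, of quiver $Q^{op}$, to a cluster-tilting object of $\mathcal{C}_{Q}$ of quiver $Q^{op}$, which again places $Q^{op}$ in the mutation class of $Q$. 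Combining the two equivalences yields the corollary.

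The step I expect to be the main obstacle is the forward half of the first equivalence, namely that a quiver in the mutation class of $Q$ which happens to be acyclic must genuinely occur as a local slice of $\Gamma_{tr}\cong\mathbb{Z}Q$, and not merely as the quiver of some abstract cluster-tilting object. The delicate point is to guarantee that the summands of $T$ all lie in the transjective component and assemble into a section; equivalently, that when $\operatorname{End}_{\mathcal{C}_{Q}}(T)$ is hereditary the object $T$ is induced by a genuine tilting module and carries no regular (tube or $\mathbb{Z}A_{\infty}$) summands. Here I would lean on the cluster-category results of Buan--Marsh--Reineke--Reiten--Todorov together with Keller--Reiten, which identify the cluster-tilting objects with hereditary endomorphism algebra with the local slices of $\mathbb{Z}Q$; once this identification is in hand, the remainder of the argument is bookkeeping.
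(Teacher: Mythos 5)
Your proposal is correct in substance, but it follows a genuinely different route from the paper, most visibly in the forward direction. The paper never leaves cluster-automorphism theory: from the mutation equivalence of $Q$ and $Q^{op}$ it invokes its own Corollary 4.1 to produce an inverse cluster automorphism of $\mathcal{A}(X,Q)$, and then cites [\ref{ashifsh}, Corollary 3.1], according to which such an automorphism induces a triangle equivalence $\mathcal{D}^{b}(\operatorname{mod}KQ)\simeq \mathcal{D}^{b}(\operatorname{mod}KQ^{op})$; the statement is thus harvested directly from the paper's main theorem. You bypass cluster automorphisms entirely and interpose the combinatorial condition $\mathbb{Z}Q\cong\mathbb{Z}Q^{op}$: mutation equivalence yields a cluster-tilting object of $\mathcal{C}_{Q}$ with acyclic quiver $Q^{op}$, hereditariness of its endomorphism algebra plus the Keller--Reiten recognition theorem [\ref{kelrei}] place its summands on a local slice of the transjective component, and the classical facts that every section of $\mathbb{Z}Q$ is reflection-equivalent to $Q$ and that sink/source reflections are derived equivalences then give $KQ\sim_{\mathrm{der}}KQ^{op}$. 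For the converse, your ``direct'' variant and the paper coincide in descending the derived equivalence to the orbit categories via Keller's theorem [\ref{kel}]; but where the paper simply cites [\ref{kelrei}] for ``equivalence of cluster categories implies mutation equivalence'', you unpack that step into the connectedness of the cluster-tilting graph and the compatibility of mutation of cluster-tilting objects with quiver mutation [\ref{bmrrt}], which is in fact what that citation conceals. The trade-off: the paper's route is short and exhibits the corollary as a genuine consequence of its embedding construction (Theorem 3.1, Lemma 4.1, Corollary 4.1), but is only as strong as [\ref{ashifsh}, Corollary 3.1]; your route is entirely independent of the paper's results and actually proves the stronger, known statement that two connected acyclic quivers are mutation equivalent if and only if their path algebras are derived equivalent, of which the corollary is the case $Q'=Q^{op}$. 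The price is heavier inputs with loose attribution --- ``cluster-tilted algebras with acyclic quiver are hereditary'' is not in [\ref{bmrrt}], and ``derived equivalent iff $\mathbb{Z}Q\cong\mathbb{Z}Q'$'' is a standard consequence of [\ref{ha}] together with the reflection description of sections rather than a literal statement of Happel's --- so you should pin down those citations, but each of these facts is true and there is no genuine gap.
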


\begin{proof}
Let $Q$ be a finite connected acyclic quiver and $K$ an algebraically closed field, $KQ$ the path algebra.
Suppose that $Q$ and $Q^{op}$ are mutation equivalent. Then  Corollary 4.1 implies that there exists an inverse automorphism of the cluster algebra
$\mathcal{A}$ denoted by $\sigma.$ Then, because of [\ref{ashifsh}, Corollary 3.1]
$\sigma$ induces a triangle equivalence $\sigma_{\mathcal{D}}:\mathcal{D}^{b}$(mod$KQ$)$\longrightarrow \mathcal{D}^{b}$(mod$KQ^{op}$).
Conversely, suppose that $KQ$ and $KQ^{op}$ are derived equivalent, the cluster category $\mathcal{C}_{Q}$ of the finite connected acyclic quiver $Q$ is an orbit category of
$\mathcal{D}^{b}$(mod$KQ$). It was proved by Keller in [\ref{kel}, Theorem 1] that the canonical projection $\pi :\mathcal{D}^{b}$(mod$KQ)\longrightarrow \mathcal{C}_{Q}$ is a triangle functor.
Therefore $\mathcal{C}_{Q}$  and
$\mathcal{C^{op}}_{Q}$ are triangulated equivalent. Thus, according to [\ref{kelrei}], $Q$ and $Q^{op}$ are mutation equivalent.

\end{proof}

      \addcontentsline{toc}{chapter}{Bibliographie}

$$   $$

D\'{e}partement de Mathématiques, Universit\'{e} de Sherbrooke
2500,boul. de l'Universit\'{e},
Sherbrooke, Qu\'{e}bec, J1K2R1
Canada
.\\
E-mail address: ndoune.ndoune@usherbrooke.ca

\end{document}